\documentclass[12pt,reqno] {amsart}
\usepackage{graphicx}
\usepackage{tikz}
\usepackage[utf8]{inputenc}
\usepackage{amsmath, amssymb, amsfonts}
\usepackage{mathrsfs}
\usepackage{xcolor}
\usepackage{todonotes}
\usepackage{ulem}

\usepackage{hyperref}
\usepackage{mathtools}
\usepackage{amsmath}
\usepackage{amssymb}
\usepackage{amsthm}
\newtheorem{proposition}{Proposition}
\newtheorem{question}{Question}
\newtheorem{lemma}{Lemma}
\newtheorem{theorem}{Theorem}
\newtheorem{cor}{Corollary}
\theoremstyle{remark}
\newtheorem{remark}[theorem]{Remark}

\theoremstyle{definition}
\newtheorem{definition}[theorem]{Definition}
\newtheorem{example}[theorem]{Example}

\title[von Neumann Inequality and Doubly Contractive Weighted Shift]{von Neumann Inequality for a class of Doubly Contractive Weighted Shifts}
 \author[S. Dey]{Soumyadip Dey}
 \author[R. Gupta]{Rajeev Gupta}
\author[S. Kumar]{Surjit Kumar}
 \address[S. Dey]{School of Mathematics and Computer Science\\Indian Institute of Technology Goa, India}
 \email{soumyadip22232101@iitgoa.ac.in}
 \address[R. Gupta]{School of Mathematics and Computer Science\\Indian Institute of Technology Goa, India}
 \email{rajeev@iitgoa.ac.in}
 \address[S. Kumar]{Department of Mathematics \\  Indian Institute of Technology Madras, Chennai - 600036, India}
 \email{surjit@iitm.ac.in}

   \subjclass[2020]{Primary 47B37, 47A20 Secondary 47A13}
\keywords{multivariable weighted shift, von Neumann's inequality, spherical dilation, doubly contractive, balanced weighted shift}
                             
\begin{document}

\begin{abstract}
In this article, we investigate the ball version of von Neumann inequality for the class of doubly contractive $d$-tuple of weighted shift. We show that if the weighted shift is balanced or satisfies an appropriate weight condition, then it admits a spherical unitary dilation. Consequently, such tuples satisfy the von Neumann inequality over Euclidean unit ball. For the general class of commuting tuple of doubly contractive operators (not necessarily weighted shift) on a Hilbert space, we further establish von Neumann inequality for homogeneous polynomials of degree at most $2.$ 
\end{abstract}

\maketitle

\section{Introduction}
The von Neumann inequality \cite{vN} states that $\|p(T)\|\leqslant \|p\|_{\mathbb D,\infty}$
for all polynomials $p\in\mathbb{C}[z]$ and for any contraction $T$ on a complex Hilbert space. 
Here $\|p\|_{\Omega,\infty}$ denotes the supremum of $|p|$ over the domain $\Omega.$ Whenever $\Omega$ is clear from the context, we shall omit it from the notation $\|p\|_{\Omega, \infty}.$
A proof of this non-trivial inequality can be obtained through Sz.-Nagy dilation theorem \cite{Sz-dilation} which states that for any contraction $T$ on a separable Hilbert space $\mathcal H$ there is a unitary dilation $U$ (on a possibly bigger Hilbert space $\mathcal K$) associated with $T$. This, in turn, implies that $p(T)$ is obtained by compressing $p(U)$ to the Hilbert space $\mathcal H.$

In what follows, $\mathbb N$ and $\mathbb Z_+$ denote the set of all positive integers and the set of all non-negative integers respectively. Two natural higher dimensional analogues to the unit disc $\mathbb D$ are the Euclidean unit ball $\mathbb B_d$ and the unit polydisc $\mathbb D^d.$  Since the emergence of the article (1951)\cite{vN}, the generalization of von Neumann inequality to these balls have been active area of research leading to many interesting results. Since the class of weighted shifts are concrete and tractable, it becomes important in providing examples, counterexamples, and models across many areas. The validity of von Neumann inequality to the class of weighted shift has been an interesting question since (1974)\cite{Shields}. 

For any pair of commuting contractions $(T_1,T_2)$, a generalization of the von Neumann inequality:
$\|p(T_1,T_2)\|\leqslant \|p\|_{\mathbb D^2,\infty},$ $p\in\mathbb{C}[z_1,z_2],$ 
follows from a deep theorem of And$\hat{\mbox{o}}$ (1963)\cite{Ando} on unitary dilation of a pair of commuting contractions. For $d\in\mathbb N,$ let 
$\mathscr{C}_d$ denote the set of all $d$-tuples $\boldsymbol T=(T_1,\ldots,T_d)$ of commuting contractions 
on some Hilbert space $\mathcal{H}.$ 
Parrott (1970)\cite{Parrott} produced an element in $\mathscr C_3$ which satisfies von Neumann’s inequality but does not dilate to a tuple of commuting
unitaries. This proves that getting commuting unitary dilation of a tuple of commuting contraction may not be guaranteed even if the commuting tuple of contractions satisfies von Neumann inequality. The question of validity of von Neumann inequality in multi-variable was answered in the negative by Varopoulos in the paper \cite{V1}, where he showed 
that the von Neumann inequality 
fails for $\boldsymbol T$ in $\mathscr{C}_d,$ for some $d > 2.$ 
In the addendum of the same article, he along with Kaijser \cite{V1} and 
simultaneously Crabb and Davie \cite{CD} 
produced an explicit example of 
three commuting contractions $T_1,T_2,T_3$ 
and a polynomial $p$ for which 
$\|p(T_1,T_2,T_3)\| > \|p\|_{\mathbb D^3,\infty}.$ 
Since then it has been one of the peculiar topics in operator theory. Even though dilation method to prove von Neumann inequality doesn't work in general, it may still work if we restrict our attention to some specific class of commuting contractions, for instance, Hartz (2017)\cite{Hz} answered the question of dilation of commuting contractive weighted shifts affirmatively and proved the following result:
\begin{theorem}\cite[Theorem 1.1]{Hz}
Let $\boldsymbol T = (T_1, \ldots, T_d)$ be a contractive $d$-variable weighted shift with non-zero weights. Then $\boldsymbol T$ dilates to a $d$-tuple of commuting unitaries.
\end{theorem}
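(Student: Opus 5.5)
The strategy is to reduce to a statement about weight sequences, then to produce the dilation one coordinate at a time. At each step one coordinate is replaced by an isometry, while the remaining coordinates stay commuting contractive weighted shifts with non-zero weights. After $d$ steps we have a commuting $d$-tuple of isometries, and this extends to commuting unitaries by the standard argument (It\^{o}'s theorem / Brehmer-type minimal unitary extension). Compressing polynomials then gives the dilation.

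\emph{Normalization.} Write $T_j e_\alpha = w_j(\alpha) e_{\alpha+\varepsilon_j}$ for $\alpha\in\mathbb Z_+^d$. Commutativity means $w_i(\alpha)w_j(\alpha+\varepsilon_i)=w_j(\alpha)w_i(\alpha+\varepsilon_j)$. Because the weights are non-zero, we may conjugate by a diagonal unitary to make all weights positive. We then set $\beta(0)=1$ and $\beta(\alpha+\varepsilon_j)=w_j(\alpha)\beta(\alpha)$; this is well defined by the commutation relation. Now $\boldsymbol T$ is unitarily equivalent to $(M_{z_1},\dots,M_{z_d})$ on the weighted Hardy space $H^2(\beta)$ spanned by the monomials $z^\alpha$ with $\|z^\alpha\|=\beta(\alpha)$. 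Contractivity of $T_j$ says exactly that $\beta(\alpha+\varepsilon_j)\le\beta(\alpha)$ for every $\alpha$ and $j$.

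\emph{Inductive step.} The key lemma I would prove is the following. Suppose $\beta$ is a weight on $\mathbb Z_+^{k}\times\mathbb Z_+^{d-k}$ such that the first $k$ coordinates act isometrically and the remaining ones act contractively. Then there is a Hilbert space $\mathcal K\supseteq H^2(\beta)$ and a commuting tuple on $\mathcal K$ with the following properties:
\begin{itemize}
\item the $(k+1)$-st coordinate becomes an isometry;
\item the first $k$ coordinates remain isometries;
\item the last $d-k-1$ coordinates remain contractive weighted shifts, with respect to an enlarged orthonormal basis indexed by a bigger lattice;
\item $H^2(\beta)$ is co-invariant, so the new tuple is a dilation.
\end{itemize}
For the construction, take the coordinate $m=k+1$. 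Fix all the other indices $\gamma$. Along the line $n\mapsto\beta(\gamma+n\varepsilon_m)$, which is non-increasing, the one-variable shift with weights $\beta(\gamma+(n+1)\varepsilon_m)/\beta(\gamma+n\varepsilon_m)$ has an explicit isometric dilation. This dilation is again a weighted shift, on a tree-like or $\mathbb Z_+\times\mathbb Z_+$ index set: one adjoins new basis vectors $e_{\gamma,n,r}$ that carry the defect $\beta(\gamma+n\varepsilon_m)^2-\beta(\gamma+(n+1)\varepsilon_m)^2\ge0$. Equivalently, one realizes the new norm as $\|z^\alpha\|^2=\sum_r c_r(\alpha)$, a sum of nonnegative terms, in the spirit of a Hausdorff-type moment decomposition along one direction. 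The other coordinates are then extended to the new basis vectors by the same multiplicative rule, using the ratios of the defects.

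\emph{Main obstacle.} The hard step is to verify that the extended operators $T_j$ ($j>k+1$) are still contractions and still commute with the new isometry. Concretely, we need inequalities of the form
\[
\beta(\gamma+\varepsilon_j+n\varepsilon_m)^2-\beta(\gamma+\varepsilon_j+(n+1)\varepsilon_m)^2\ \le\ \beta(\gamma+n\varepsilon_m)^2-\beta(\gamma+(n+1)\varepsilon_m)^2 ,
\]
and these do not follow from monotonicity alone. My first attempt would avoid direct monotonicity of defects. Instead I would exploit the freedom in choosing the isometric dilation: choose the defect vectors in a non-minimal way, for instance by spreading each defect over infinitely many new basis vectors with a free parameter. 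The parameter is then tuned so that contractivity in the remaining directions reduces to the hypothesis $\beta(\alpha+\varepsilon_j)\le\beta(\alpha)$. The non-vanishing of the weights is used precisely here, to make the ratios defining the extended weights meaningful. Once this lemma holds, the induction on $k$ and the final passage from commuting isometries to commuting unitaries are routine.
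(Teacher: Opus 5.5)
There is a genuine gap, and it is exactly the step you flag as the main obstacle. The one-coordinate lemma is never proved. In the form you describe it, it is false: the old basis vectors $e_\alpha$ are kept, defect vectors are adjoined, and the remaining coordinates act on the $e_\alpha$ by their old weights and on defect vectors by ratios of defects.

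Take $d=2$ and $\beta(\alpha)=1$ if $\min(\alpha_1,\alpha_2)=0$, $\beta(\alpha)=\tfrac12$ otherwise. This is a commuting contractive weighted shift with all weights in $\{\tfrac12,1\}$. Your defect inequality fails for $\gamma=0$, $m=1$, $j=2$, $n=0$: the left side is $\beta(0,1)^2-\beta(1,1)^2=\tfrac34$ and the right side is $\beta(0,0)^2-\beta(1,0)^2=0$.

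No non-minimal choice of defect vectors can rescue this, because there is no defect at $e_{(0,0)}$ to spread. Any isometry $V_1$ on $\mathcal K\supseteq\mathcal H$ with $P_{\mathcal H}V_1|_{\mathcal H}=T_1$ must satisfy $V_1e_{(0,0)}=e_{(1,0)}$, whereas $V_1e_{(0,1)}=\tfrac12e_{(1,1)}+f$ with $0\neq f\perp\mathcal H$. Now suppose $\tilde T_2$ is a weighted shift in an orthonormal basis containing the $e_\alpha$, with $P_{\mathcal H}\tilde T_2|_{\mathcal H}=T_2$. Non-vanishing of the weights forces $\tilde T_2e_\alpha=T_2e_\alpha$. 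Hence $\tilde T_2V_1e_{(0,0)}=\tfrac12e_{(1,1)}\neq\tfrac12e_{(1,1)}+f=V_1\tilde T_2e_{(0,0)}$.

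So once $T_1$ is made isometric, the lift of $T_2$ must move part of $\mathcal H$ into the defect space of $T_1$, and it cannot be a weighted shift of your type. Likewise, $V_1$ itself is not a lattice weighted shift in such a basis, so the lemma's output would not satisfy its own hypothesis at the next stage. Non-zero weights do not make your defect ratios meaningful either, since a defect vanishes exactly where a weight equals $1$. Commutant lifting does give, for each $j$, some lift of $T_j$ commuting with $V_1$. For $d\ge3$, however, nothing in your proposal makes these lifts commute with each other, and that is precisely where general commuting triples fail (Parrott).

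The missing idea is to avoid constructing the dilation at all. The paper only cites this theorem, but its Section 3 reproduces Hartz's argument:
\begin{enumerate}
\item It suffices to treat truncated shifts. Their entries depend polynomially on the weight vector, which ranges over a compact set $X$ of commuting weight families with $|w_{I,j}|\le 1$.
\item Multiply by $t$ the weights on scalable pairs ($I$ good, $I+\varepsilon_j$ bad). This preserves the commutation relations and stays in $X$ for $|t|\le r$ with $r>1$.
\item By the maximum modulus principle and iteration, the Shilov boundary of $X$ is contained in the set of weight vectors whose weights are all unimodular.
\item Those truncated shifts are unitarily equivalent to compressions of $(M_{z_1},\dots,M_{z_d})$ on $H^2(\mathbb D^d)$ to the co-invariant span of monomials of bounded degree, hence dilate to commuting unitaries.
\item The matrix-valued von Neumann inequality over $\overline{\mathbb D}^d$ then propagates from $\partial_0X$ to $X$, because $z\mapsto\langle p(T(z))f,g\rangle$ is analytic.
\item Arveson's extension theorem turns this inequality into a unitary dilation, and a limiting argument passes from truncations to $\boldsymbol T$.
\end{enumerate}
Your normalization and the final It\^{o} step are fine, but they are not where the difficulty lies.
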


A ball analogue of the von Neumann inequality can have various formulations. For instance, one might consider the von Neumann inequality for the class of row contractions or for the class of spherical contractions. However, for both of these classes, the von Neumann inequality does not hold, as evidenced by the works of  Drury (1978)\cite{Drury1978} and Hartz (2022)\cite{Hartz2022}. 
In the view of this, the following question seems very natural in the realm of weighted shifts.
\begin{question}\label{On Ball}
    Suppose $\boldsymbol T:=(T_1,\ldots, T_d)$ is a row contraction and spherical contraction on a Hilbert space. Further, assume that $\boldsymbol T$ is a $d$-variable weighted shift, does it follow that $\|p(\boldsymbol T)\|\leqslant \|p\|_{\mathbb B_d,\infty}?$
\end{question}
In fact, more generally one can ask if a $d$-variable weighted shift which is both row contraction and spherical contraction, can it be dilated to spherical isometry? In \cite{M-V}, M\"uller and Vasilescu proved that every commuting $d$-tuple operators which is $d$-hypercontraction dilates to a spherical isometry (see also \cite{At-1}). 
Dilation is much stronger notion than that of von Neumann inequality.  
In Example \ref{Counter-example: column} and Example \ref{Counter-example: row}, we see that if a weighted shift is row contractive or spherical contractive, it may not satisfy von Neumann inequality. 
In fact, the following remark highlights the necessity of considering both row contractions and spherical contractions for a weighted shift to admit a spherical unitary dilation.
 If a commuting $d$-tuple $\boldsymbol T=(T_1,\ldots, T_d)$ of bounded linear operators admits a spherical dilation then a routine verification shows that $\boldsymbol T$ must be both spherical and row contraction. 
 
\begin{definition}[Doubly Contractive Operator tuple]
Let \( \boldsymbol T = (T_1, \dots, T_d) \) be a commuting \( d \)-tuple of operators on \( \mathcal{B}(\mathcal{H}) \), the bounded linear operators on a Hilbert space \( \mathcal{H} \). Then \( \boldsymbol T \) is said to be a \textit{doubly contractive operator tuple} if \( \sum_{j=1}^{d} T_j^* T_j \leq I \), and \( \sum_{j=1}^{d} T_j T_j^* \leq I \).
\end{definition}

\begin{definition}[Spherical Isometry]
Let \( \boldsymbol T = (T_1, \dots, T_d) \) be a commuting \( d \)-tuple of operators on \( \mathcal{B}(\mathcal{H}) \). Then \( \boldsymbol T \) is called a \textit{spherical isometry} if \( \sum_{j=1}^{d} T_j^* T_j = I \). 
\end{definition}

\begin{definition}[Spherical Dilation]
    A commuting $d$-tuple $U=(U_1, \ldots, U_d)$ is said to be spherical unitary if each $U_j$ is normal and $\sum_{j=1}^d U^*_jU_j = I.$ A commuting $d$-tuple $T=(T_1, \ldots, T_d)$ in $\mathcal B(\mathcal H)$ is said to admit a {\it spherical dilation} if there exist a Hilbert space $\mathcal K \supset \mathcal H$ and a  spherical unitary $U=(U_1, \ldots, U_d)$ in $\mathcal B(\mathcal K)$
 such that $T^I=P_{\mathcal H}U^I|\mathcal H$ for all $I \in \mathbb Z_{+}^d,$ where $P_{\mathcal H}$ is the orthogonal projection of $\mathcal K$ onto $\mathcal H$ and $\mathbb Z_+$ denotes the set of all non-negative integers. 
\end{definition}

The main aim of this article is an attempt to tackle the problem stated in Question \ref{On Ball} by developing a technique parallel to that in \cite{Hz}. On the way, we get the following positive result.

\begin{theorem}\label{Main Theorem}
Let $\boldsymbol T = (T_1, \ldots, T_d)$ be a doubly contractive $d$-variable weighted shift with weights $\{w_{I,j}: I\in \mathbb Z_+^d,\ 1\leq j\leq d \}.$ Suppose there are constants $m_1,\ldots,m_d\in \mathbb R$ such that for each $j\in \{1,\dots ,d\},$ $|w_{I,j}|\leqslant m_j$ for all $I\in\mathbb Z_+^d$ and \(\sum_{j=1}^d m_j^2 \leqslant 1\). Then $\boldsymbol T$ admits a spherical dilation. In particular, $\boldsymbol T$ satisfies von Neumann’s inequality.
\end{theorem}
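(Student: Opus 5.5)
The plan is to reduce Theorem~\ref{Main Theorem} to Hartz's dilation theorem (Theorem~1.1 of \cite{Hz}, quoted above) by rescaling, and then to dilate a commuting normal tuple with joint spectrum in the closed ball to a spherical unitary. First assume all $m_j>0$; if some $m_j=0$ then $T_j=0$, and we may drop that coordinate and put $U_j=0$ in the end. Put $S_j:=m_j^{-1}T_j$. Then $\boldsymbol S=(S_1,\ldots,S_d)$ is again a commuting $d$-variable weighted shift, with weights $w_{I,j}/m_j$ of modulus at most $1$. Hence each $S_j$ is a contraction, so $\boldsymbol S$ is a contractive $d$-variable weighted shift.

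Suppose first that all weights are non-zero. By \cite[Theorem 1.1]{Hz} there are a Hilbert space $\mathcal K_1\supset\mathcal H$ and commuting unitaries $V_1,\ldots,V_d$ on $\mathcal K_1$ with $\boldsymbol S^I=P_{\mathcal H}V^I|_{\mathcal H}$ for all $I\in\mathbb Z_+^d$. Set $N_j:=m_jV_j$. The $N_j$ are commuting normal operators with
\[
\sum_{j=1}^d N_j^*N_j=\Big(\sum_{j=1}^d m_j^2\Big)I\leqslant I ,
\]
and $\boldsymbol T^I=m^I\boldsymbol S^I=P_{\mathcal H}N^I|_{\mathcal H}$. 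So $\boldsymbol T$ has a normal dilation $\boldsymbol N$ whose Taylor spectrum lies in $\overline{\mathbb B}_d$.

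The second step is to dilate $\boldsymbol N$ to a spherical unitary. By the spectral theorem, $\boldsymbol N$ induces a unital $*$-representation $\pi$ of $C(\overline{\mathbb B}_d)$. Each point $z$ of the closed ball is the barycenter of a probability measure $\mu_z$ on $\partial\mathbb B_d$ that represents all analytic monomials, meaning $\int\zeta^I\,d\mu_z(\zeta)=z^I$ for every $I\in\mathbb Z_+^d$. A weak$^*$-measurable choice is the Poisson--Szeg\H{o} (harmonic) measure, extended by continuity to the boundary. Define the unital positive map $\Phi:C(\partial\mathbb B_d)\to C(\overline{\mathbb B}_d)$ by $\Phi(f)(z)=\int f\,d\mu_z$. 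Then $\pi\circ\Phi$ is unital and positive, hence completely positive because the domain is commutative. Stinespring's theorem gives a representation $\rho$ of $C(\partial\mathbb B_d)$ on some $\mathcal K\supset\mathcal K_1$ with $\pi\circ\Phi=P_{\mathcal K_1}\rho(\cdot)|_{\mathcal K_1}$. The tuple $U_j:=\rho(\zeta_j)$ is then a spherical unitary, and
\[
P_{\mathcal K_1}U^I|_{\mathcal K_1}=\pi(z^I)=N^I .
\]
Composing the two compressions gives $\boldsymbol T^I=P_{\mathcal H}U^I|_{\mathcal H}$, which is the required spherical dilation. Von Neumann's inequality over $\mathbb B_d$ follows at once: $\|p(\boldsymbol T)\|\leqslant\|p(U)\|\leqslant\|p\|_{\partial\mathbb B_d,\infty}$, since $U$ is normal with joint spectrum in the sphere.

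I expect the main obstacle to be the case where some weights vanish, because Hartz's theorem requires non-zero weights. I would handle it by approximation. The property ``$\boldsymbol T$ admits a spherical dilation'' is equivalent, by Arveson's extension theorem together with Stinespring, to complete positivity of the map $z^I\bar z^J\mapsto \boldsymbol T^{*J}\boldsymbol T^I$ on $C(\partial\mathbb B_d)$. This condition is preserved under entrywise (WOT) limits of weighted shifts with uniformly bounded weights. So it suffices to approximate $\boldsymbol T$ by commuting weighted shifts with non-zero weights still bounded by $m_j$ in modulus.

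Producing such commuting perturbations is the delicate point. One must preserve the cocycle identities $w_{I,j}w_{I+e_j,k}=w_{I,k}w_{I+e_k,j}$. One candidate is to pass to the moment description $\beta(I)$ along paths and perturb there. An alternative is to check whether Hartz's argument goes through verbatim for the rescaled shift $\boldsymbol S$ with zero weights allowed, using his construction of the positive-definite kernel on $\mathbb Z^d$. Note that the doubly contractive hypothesis is not used beyond being implied by the weight bound; the hypothesis $\sum_j m_j^2\leqslant1$ is exactly what places the normal dilation in the ball.
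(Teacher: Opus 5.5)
Your argument is correct for non-zero weights, which is also the only case the paper actually handles. It takes a genuinely different and much shorter route.

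The paper re-runs Hartz's machinery inside the ball:
\begin{enumerate}
\item it sets up the compact set $X$ of admissible truncated weight families;
\item by scaling the scalable pairs $(I,j)$, it shows that the Shilov boundary lies in the set $X_0$ of constant-weight families $w_{I,j}=m_j$ with $\sum_j m_j^2=1$;
\item it dilates these families via Athavale's theorem, since they are spherical isometries;
\item it transfers the dilation from $\partial_0X$ to all of $X$ with Arveson's extension theorem (Proposition~\ref{Arveson});
\item it passes from truncated to full shifts via Hartz's truncation lemma.
\end{enumerate}

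You instead apply Hartz's theorem as a black box to the contractive shift $(m_1^{-1}T_1,\ldots,m_d^{-1}T_d)$ and rescale the unitary dilation back. This shows the theorem is a direct corollary of Hartz's result and is really a polydisc statement in disguise. Indeed, $\boldsymbol T$ has a normal dilation with joint spectrum in $\prod_j m_j\mathbb T\subset\overline{\mathbb B}_d$. So you even get $\|p(\boldsymbol T)\|\leqslant\sup\{|p(z)|:|z_j|\leqslant m_j \mbox{ for all } j\}$, which is stronger than the ball inequality. Your route also makes transparent that the doubly contractive hypothesis follows automatically from the weight bounds.

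Your Poisson--Szeg\H{o}/Stinespring step is correct but can be dropped. Replace $m_1$ by $\tilde m_1=(1-\sum_{j\geqslant 2}m_j^2)^{1/2}\geqslant m_1$ and keep $\tilde m_j=m_j$ for $j\geqslant 2$. The bound $|w_{I,j}|\leqslant\tilde m_j$ still holds, and now $U_j=\tilde m_jV_j$ satisfies $\sum_jU_j^*U_j=\sum_j\tilde m_j^2 I=I$. So $U$ is already a spherical unitary dilation.

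Your closing paragraph on zero weights is only a sketch, and one ingredient is misstated. The assignment $z^I\bar z^J\mapsto\boldsymbol T^{*J}\boldsymbol T^I$ does not define a map on $C(\partial\mathbb B_d)$ unless $\sum_jT_j^*T_j=I$, because $\sum_j z_j\bar z_j=1$ on the sphere. Moreover, a dilation does not compress $U^{*J}U^I$ to $\boldsymbol T^{*J}\boldsymbol T^I$ in any case. The criterion that is stable under strong limits of uniformly bounded weighted shifts is the matrix-valued von Neumann inequality over $\overline{\mathbb B}_d$, as in Proposition~\ref{Arveson}. The existence of commuting approximants with non-zero weights is left open in your sketch.

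This is not a shortfall relative to the paper, though. Its truncated lemma assumes $w_{I,j}\in(0,m_j]$, and its set $X$ is the closure of families with non-zero entries. The passage to positive weights rests on the Jewell--Lubin lemma, which needs non-zero weights. Your treatment of the case $m_j=0$ is fine once you note that the remaining operators form an orthogonal direct sum of $(d-1)$-variable shifts, one for each value of $i_j$.
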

In Theorem \ref{thm-dila}, we show that every spherical contractive $d$-tuple of operators of multiplication by the co-ordinate functions on a spherically balanced Hilbert space of formal power series (see Definition \ref{Spherically Balanced- definition}) admits a spherical dilation. Even though in Example \ref{Counter-example: column} and in Example \ref{Counter-example: row}, a homogeneous polynomial of degree $2$ suffices to create counter-examples for concerned formulation of the von Neumann inequality, in Proposition \ref{von Neumann's inequality for doubly contractive operators}, we show that a commuting tuple of doubly contractive operators on a Hilbert space always satisfies von Neumann inequality for homogeneous polynomials of degree at most $2.$  

\section{Weighted Shift}
Let \( d \in \mathbb{N} \).  For an element \( I \in \mathbb{Z}_+^d \) and \( 1 \leq j \leq d \), we write \( \epsilon_j \) for the multi-index \( I = (i_1, \dots, i_d) \) with \( i_j = 1 \) and \( i_k = 0 \) for \( k \neq j \). Given a multi-index \( I = (i_1, \dots, i_d) \), we define \( |I| = i_1 + i_2 + \dots + i_d \). 
Given two multi-indices \( I = (i_1, \dots, i_d) \) and \( J = (j_1, \dots, j_d) \), we say that \( I \leq J \) if \( i_k \leq j_k \) for all \( 1 \leq k \leq d \). 

Let $({w}_{I,j})_{(I,j) \in \mathbb{Z}_+^d \times \{1, \dots, d\}}$
be a bounded collection of complex numbers satisfying the commutation relations
\begin{eqnarray}\label{commutation relation}
    {w}_{I,j} \; {w}_{I+\epsilon_j ,k} = {w}_{I,k} \; {w}_{I+\epsilon_k ,j} \quad \text{for all } I \in \mathbb{Z}_+^d \text{ and } j,k \in \{1,\dots ,d\}.
\end{eqnarray}
Let \( \mathcal{H} \) be a Hilbert space with an orthonormal basis \( \{ e_I : I \in \mathbb{Z}_+^d \} \). A {\it $d$-variable weighted shift} with weights $({w}_{I,j})$ is the unique $d$-tuple of bounded linear operators \( (T_1 ,\dots ,T_d ) \) on \( \mathcal{H} \) satisfying
\begin{eqnarray*}
    T_j e_I = {w}_{I,j} e_{I+\epsilon_j}, \quad \text{for } I \in \mathbb{Z}_+^d, \, j \in \{1,\dots,d\}.
\end{eqnarray*}
Note that the adjoint of $T_j$ can be computed to be
\[
T_j^* e_I =
\begin{cases}
\overline{{w}}_{I-\epsilon_j,j} e_{I-\epsilon_j}, & \text{if } i_j \geq 1, j\in \{1,\dots,d\} \\
0, & \text{if } i_j = 0.
\end{cases}
\]
Observe that the relations \eqref{commutation relation} guarantee that the tuple of  operators \( (T_1,\ldots, T_d) \) is a commuting tuple.  Evidently, \( T_j \) is a contraction if and only if \( |{w}_{I,j}| \leq 1 \) for all \( I \in \mathbb{Z}_+^d \). We refer the reader to the \cite{J-L} for further study of weighted shifts operators. The following lemma is taken from the same article, see \cite[Corollary 2]{J-L}.
\begin{lemma}  
Let \( T \) be a \( d \)-variable  weighted shift on \( {\ell^2}(\mathbb{Z}_+^d) \) with non-zero weights \( (w_{I,j}) \). Then \( T \) is unitarily equivalent to the \( d \)-variable weighted shift with weights \( (|w_{I,j}|) \).
\end{lemma}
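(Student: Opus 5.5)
The plan is to build an explicit diagonal unitary $D$ on $\ell^2(\mathbb Z_+^d)$ that conjugates $\boldsymbol T$ to the shift with weights $(|w_{I,j}|)$. Concretely, we look for unimodular numbers $\lambda_I$, $I\in\mathbb Z_+^d$, such that $D e_I=\lambda_I e_I$ and
\[
D^* T_j D = S_j, \qquad j=1,\ldots,d,
\]
where $S_j e_I = |w_{I,j}|\, e_{I+\epsilon_j}$. Computing both sides on $e_I$, we have $D^*T_jDe_I=\lambda_I\overline{\lambda_{I+\epsilon_j}}\,w_{I,j}\,e_{I+\epsilon_j}$. So it suffices to find $\lambda$'s with
\[
\lambda_{I+\epsilon_j}=\lambda_I\,\frac{w_{I,j}}{|w_{I,j}|}\qquad\text{for all } I\in\mathbb Z_+^d,\ 1\leq j\leq d,
\]
which makes sense because all weights are non-zero.

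To produce the $\lambda_I$, set $\lambda_0=1$ and, for $I$ with $|I|=n$, pick any lattice path $0=I_0,I_1,\ldots,I_n=I$ with $I_{k+1}=I_k+\epsilon_{j_k}$. Define $\lambda_I$ as the product of the phases $u_{I_k,j_k}:=w_{I_k,j_k}/|w_{I_k,j_k}|$ along that path. Once this is well defined, the recursion above holds by construction, since appending one step to a path for $I$ gives a path for $I+\epsilon_j$. $D$ is then unitary because it is diagonal with unimodular entries, and the lemma follows. No boundedness issue arises, since $S_j$ has the same norms on basis vectors as $T_j$.

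The main step is path-independence. Dividing the commutation relation \eqref{commutation relation} by its modulus gives
\[
u_{I,j}\,u_{I+\epsilon_j,k}=u_{I,k}\,u_{I+\epsilon_k,j},
\]
which is again legitimate because the weights are non-zero. This says that the product of phases is unchanged when two consecutive steps $j,k$ in a path are swapped. Any two monotone lattice paths from $0$ to $I$ have the same multiset of steps, so they differ by a finite sequence of adjacent transpositions. Hence the product is the same for every path, and we would argue this formally by induction on $|I|$. This is the only nontrivial point; the rest is routine verification.
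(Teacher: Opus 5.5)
Your proof is correct and complete. The paper gives no argument of its own for this lemma; it simply quotes \cite[Corollary 2]{J-L}, and your diagonal unitary with $\lambda_{I+\epsilon_j}=\lambda_I\,w_{I,j}/|w_{I,j}|$ is the standard way to establish it.

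As a minor shortcut, you can get path-independence from the commutativity of the $T_j$ instead of the transposition argument. Write $T^Ie_0=c_Ie_I$, where $c_I$ is the product of the weights along any monotone path to $I$; then you can take $\lambda_I=c_I/|c_I|$ directly.
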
 

For \( N \in \mathbb{N} \), define the finite-dimensional subspace
\[
\mathcal{H}_N = \operatorname{span} \{ e_I : |I| \leq N \}.
\]
Suppose \( (w_{I,j}) \) is a collection of complex numbers satisfying the commutation relations \eqref{commutation relation} for \( |I| \leq N - 1 \) and \( j \in \{1, \dots, d\} \). 
The truncated ($d$-variable) weighted shift with weights \( (w_{I,j}) \) is the unique $d$-tuple of operators \( (T_1, \dots, T_d) \) on \( \mathcal{H}_{N+1} \) satisfying
\[
T_j e_I =
\begin{cases}
{w}_{I,j} e_{I+\epsilon_j}, & \text{if } |I| \leq N, \\
0, & \text{if } |I| = N+1.
\end{cases}
\]

It is easy to see that a \( d \)-variable weighted shift \( (T_1, \dots, T_d) \) with weights \( (w_{I,j}) \), for \( (I,j) \in \mathbb{Z}_+^d \times \{1,\ldots,d\} \), is doubly contractive if and only if 
\[
\sum_{j=1}^d |{w}_{I,j}|^2 \leq 1, \quad
\sum_{j=1}^d \big| {w}_{I - \epsilon_j ,j} \big|^2 \leq 1,
\]
with the understanding that for all $k\in\{1,\ldots, d\},$ $w_{J,k}=0$ whenever $J\notin \mathbb Z_{+}^d.$
Let $\{\beta_I\}_{I \in \mathbb{Z}_+^d}$ be a multi-sequence of
positive numbers such that $\beta _0 =1$ and
\begin{eqnarray*}
    \sup \left\{\frac{\beta_{I+\varepsilon_j}} {\beta_I} : 1 \leq j \leq d, \;  I \in \mathbb{Z}_+^d\right\} < \infty. 
\end{eqnarray*}
Consider the Hilbert space $H^2(\beta)$ of formal power series
$f(z)=\sum_{I \in  \mathbb{Z}_+^d} f_I z^I$ such that
$$ \|f\|^2_{H^2(\beta)}=\sum_{\alpha \in  \mathbb{Z}_+^d}|f_I |^2 \beta^2_I <
\infty.$$
Every $d$-variable weighted shift $T$ is unitarily equivalent to the $d$-tuple $M_z=(M_{z_1}, \ldots, M_{z_d})$ of multiplication by the co-ordinate functions $z_1, \ldots, z_d$ on $H^2(\beta),$ where $\beta_I = \|T^Ie_0\|$  for all $I \in \mathbb{Z}_+^d$ (refer to  \cite[Proposition 8]{J-L}). 
The weight $w_{I,j}$ and $\beta_I$ is related by
\begin{eqnarray*}
     w_{I,j} =\frac{\beta_{I+\varepsilon_j}} {\beta_I}  \quad \text{for all } I \in \mathbb{Z}_+^d \text{ and } j \in \{1,\dots ,d\}.
\end{eqnarray*}
For a fixed $z \in \mathbb C^d, $ consider the {\it slice} $f_z$ of a formal power series $f(z)=\sum_{I \in  \mathbb{Z}_+^d} f_I z^I$ at $t \in \mathbb C$ given by
\[f_z(t)=f(tz_1, \ldots, tz_d)=\sum_{k \in \mathbb Z_+} \Big(\sum_{ |I|=k}f_Iz^I \Big) t^k.\]

\begin{definition}\label{Spherically Balanced- definition}
    The Hilbert space $H^2(\beta)$ is said to be {\it spherically balanced} if the norm on $H^2(\beta)$
admits the {\it slice representation} $[\mu, H^2(\gamma)]$, that is, there exist a {\it Reinhardt measure} $\mu$
and a Hilbert
space $H^2(\gamma)$ of formal power series in one variable such that
\begin{eqnarray*}
     \|{f}\|^2_{H^2(\beta)} = \int_{\partial \mathbb
	B}\|{f_z}\|^2_{H^2(\gamma)}d\mu(z), \qquad \mbox{for ~all~} f \in
H^2(\beta)),
\end{eqnarray*}
where $\gamma =\{\gamma_k\}_{k \in \mathbb N}$ is given by the relation
$\beta_I=\gamma_{|I|}\|z^I\|_{L^2(\partial \mathbb B, \mu)}$ for all $I \in \mathbb{Z}_+^d.$
\end{definition}
By the Reinhardt measure, we mean a $\mathbb T^d$-invariant Borel probability measure supported in $\partial \mathbb B,$
where $\mathbb T^d=\{z \in \mathbb C^d:|z_1|=1,\ldots,|z_d|=1\}$ is the unit $d$-torus.

A complete classification of spherically balanced Hilbert spaces is provided in \cite[Theorem 1.10]{Ch-K}. For more details on spherically balanced Hilbert spaces, we refer to \cite{Ch-K} and \cite{Ku}. 
The following result describes a class of $d$-variable weighted shifts in which spherical contractivity ensures spherical dilation.
\begin{theorem} \label{thm-dila}
Let $H^2(\beta)$ be a spherically balanced Hilbert space and let $[\mu, H^2(\gamma)]$
be the slice representation for the norm on $H^2(\beta).$
Consider the $d$-tuple $M_z=(M_{z_1}, \ldots, M_{z_d})$ of multiplication by the
co-ordinate functions $z_1, \ldots, z_d$ on $H^2(\beta).$ 
If $M_z$ is a spherical contraction then it admits a  spherical dilation. 
\end{theorem}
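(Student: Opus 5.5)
The plan is to realize $M_z$ on $H^2(\beta)$ as a compression of a tensor-type model built from the slice representation. I would first reduce the spherical dilation problem to a one-variable dilation problem, and then transport the resulting unitary dilation back through the measure $\mu$.

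The first step is to compute the spherical contractivity condition in terms of $\gamma$. For $f=z^I$ we have $\sum_j \|z_j z^I\|^2 = \sum_j \beta_{I+\epsilon_j}^2$. Using $\beta_I=\gamma_{|I|}\|z^I\|_{L^2(\mu)}$ and $\sum_j |z_j|^2=1$ on $\partial\mathbb B$, this gives
\[
\sum_j \beta_{I+\epsilon_j}^2=\gamma_{|I|+1}^2\int_{\partial\mathbb B}|z^I|^2\,d\mu=\frac{\gamma_{|I|+1}^2}{\gamma_{|I|}^2}\,\beta_I^2 .
\]
The monomials are orthogonal and $\sum_j M_{z_j}^*M_{z_j}$ is diagonal in the monomial basis, so $M_z$ is a spherical contraction if and only if $\gamma_{k+1}\le\gamma_k$ for all $k$. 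In other words, $M_t$ on $H^2(\gamma)$, the unilateral weighted shift with weights $\gamma_{k+1}/\gamma_k$, is a contraction.

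The second step is the embedding. Consider $V:H^2(\beta)\to L^2(\partial\mathbb B,\mu; H^2(\gamma))$, $Vf(z)=f_z$. By the slice representation, $V$ is an isometry. It also intertwines: $(z_jf)_z(t)=z_j\,t\,f_z(t)$, so $VM_{z_j}=(M_{z_j}\otimes M_t)V$, where $M_{z_j}\otimes M_t$ acts on $L^2(\mu)\otimes H^2(\gamma)$. By Sz.-Nagy's theorem, $M_t$ has a unitary dilation $W$ on $\mathcal K\supset H^2(\gamma)$, with $M_t^n=P W^n|$ for all $n\geq 0$. Set $U_j=M_{z_j}\otimes W$ on $L^2(\mu)\otimes\mathcal K$. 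These operators commute and are normal, since $M_{z_j}$ are commuting normal operators on $L^2(\mu)$ and $W$ is unitary. Moreover
\[
\sum_j U_j^*U_j=\Big(\sum_j |z_j|^2\Big)\otimes W^*W=I,
\]
so $U$ is a spherical unitary.

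Finally I would check the compression. We have $U^I=M_{z^I}\otimes W^{|I|}$, and compressing to $L^2(\mu)\otimes H^2(\gamma)$ gives $M_{z^I}\otimes M_t^{|I|}$. Compressing further through $V$ gives $V^*(M_{z^I}\otimes M_t^{|I|})V=V^*VM_z^I=M_z^I$. Thus $M_z^I=P U^I|$ after identifying $H^2(\beta)$ with $VH^2(\beta)$, which is the required spherical dilation.

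The main obstacle is making the embedding $V$ rigorous: one must ensure that $z\mapsto f_z$ is a measurable, square-integrable $H^2(\gamma)$-valued function, so that it lies in $L^2(\mu)\otimes H^2(\gamma)$. I would handle this by checking $V$ on monomials, where $Vz^I=z^I\otimes t^{|I|}$, verifying orthogonality and the norm formula there, and then extending $V$ by linearity and continuity.
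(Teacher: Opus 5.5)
Your proof is correct, but it ends with a different final step from the paper's.

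\textbf{What the two proofs share.} Both rest on the slice map $f\mapsto f_z$, an isometry of $H^2(\beta)$ into $L^2(\partial\mathbb B,\mu)\otimes H^2(\gamma)$ that intertwines $M_{z_j}$ with $M_{z_j}\otimes\mathcal M_t$. The paper uses it in the form $\|p(M_z)f\|^2=\int_{\partial\mathbb B}\|p_z(\mathcal M_t)f_z\|^2\,d\mu(z)$. Both also use that spherical contractivity forces $\mathcal M_t$ to be a contraction. The paper quotes this from the literature; your computation $\sum_j\beta_{I+\epsilon_j}^2=(\gamma_{|I|+1}/\gamma_{|I|})^2\beta_I^2$ proves it directly.

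\textbf{Where they diverge.} The paper bounds $\|p_z(\mathcal M_t)\|$ by the matrix-valued one-variable von Neumann inequality. This gives $\|p(M_z)\|\le\sup_{w\in\overline{\mathbb B}}\|p(w)\|$ for matrix polynomials. It then leaves the passage to a dilation implicit: that is the Arveson extension argument of Proposition~\ref{Arveson}. You instead take a Sz.-Nagy unitary dilation $W$ of $\mathcal M_t$ and write down the spherical unitary $U_j=M_{z_j}\otimes W$ on $L^2(\mu)\otimes\mathcal K$ explicitly.

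\textbf{What each route buys.} Your route avoids complete contractivity and extension theorems altogether and produces a concrete dilation space and formula. The paper's route is a short norm estimate that gives the matrix-valued von Neumann inequality immediately. It also fits the abstract framework of Proposition~\ref{Arveson}, which the paper reuses for Theorem~\ref{Main Theorem}.

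\textbf{On the point you flagged.} Defining $V$ on polynomials by $Vf=\sum_I f_I\,z^I\otimes t^{|I|}$ is the right approach. Isometry on polynomials is literally the slice identity. Checked monomial by monomial, orthogonality of $z^I\otimes t^{|I|}$ and $z^J\otimes t^{|J|}$ for $|I|=|J|$, $I\neq J$, needs $\int z^I\bar z^J\,d\mu=0$, which is the $\mathbb T^d$-invariance of $\mu$; say this explicitly. Both the isometry and the intertwining then extend to all of $H^2(\beta)$ by density of the polynomials.
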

\begin{proof} First observe that the norm on $H^2(\beta)^{\oplus n}$ admits the slice representation $[\mu, H^2(\gamma)^{\oplus n}],$ that is, \begin{eqnarray*}
     \|{f}\|^2_{H^2(\beta)^{\oplus n}} = \int_{\partial \mathbb
	B}\|{f_z}\|^2_{H^2(\gamma)^{\oplus n}}d\mu(z), \qquad \mbox{for ~all~}f \in
H^2(\beta)^{\oplus n}.
\end{eqnarray*}
Let \( p = (p_{i,j})_{1 \leq i,j \leq n} \) be a matrix with entries in \( \mathbb{C}[z_1, \dots, z_d] \). For each $z \in \mathbb C^d,$ the slice $p_z$ is the matrix \( p_z = (p_{i,j,z})_{1 \leq i,j \leq n} \), where the slice $p_{i,j,z}$ at $t\in \mathbb C$ is given as $p_{i,j,z}(t)=p_{i,j}(tz_1,\ldots,tz_d)$ for each $1 \leq i,j \leq n.$
Following the idea of the proof of \cite[Proposition 2.5]{Ku}, we get that 
\begin{eqnarray*} \|p(M_z)f\|^2_{H^2(\beta)^{\oplus n}} &=& \int_{\partial \mathbb
	B}\|p_z(\mathcal M_{t})f_z\|^2_{H^2(\gamma)^{\oplus n}}d\mu(z) \\
    &\leq &  \int_{\partial \mathbb B} \|p_z(\mathcal M_{t})\|^2_{\mathcal B(H^2(\gamma)^{\oplus n})}\|{f_z}\|^2_{H^2(\gamma)^{\oplus n}}d\mu(z).\end{eqnarray*} 
In the above expression, $\mathcal M_t$ denotes the multiplication operator by the co-ordinate function $t$ acting on the Hilbert space  $H^2(\gamma)$ of formal power series in one variable.  Since  $M_z$ is a spherical contraction, it follows from \cite[Remark 2.3(4)]{Ku} that $\mathcal M_t$ is a contraction. Consequently, we obtain
\begin{eqnarray*}
     \|p(M_z)f\|^2_{H^2(\beta)^{\oplus n}} &\leq &\int_{\partial \mathbb B}  \sup_{t \in \overline{\mathbb D}}\|p_z(t)\|^2_{\mathcal B(\mathbb C^n)}\|{f_z}\|^2_{H^2(\gamma)^{\oplus n}}d\mu(z) \\
     &\leq&  \sup_{w \in \overline{\mathbb B}}\|p(w)\|^2_{\mathcal B(\mathbb C^n)} \|f\|^2_{H^2(\beta)^{\oplus n}}.
\end{eqnarray*} 
This completes the proof.
\end{proof}
A $d$-variable weighted shift $T$ is said to be {\it balanced} if the norm on $H^2(\beta)$ admits the slice representation $[\mu, H^2(\gamma)].$ As an immediate corollary, we note the following result.
\begin{cor}\label{Balanced - Dilation}
If $T$ is a spherical contractive $d$-variable weighted shift which is balanced, then $T$ admits a spherical dilation. 
\end{cor}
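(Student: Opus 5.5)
This follows from Theorem \ref{thm-dila} once we identify $T$ with $M_z$ on a spherically balanced space. Let $T$ be a balanced, spherically contractive $d$-variable weighted shift with weights $(w_{I,j})$. By \cite[Proposition 8]{J-L}, $T$ is unitarily equivalent to the tuple $M_z=(M_{z_1},\ldots,M_{z_d})$ acting on $H^2(\beta)$, where $\beta_I=\|T^Ie_0\|$. Call the implementing unitary $V:\mathcal H\to H^2(\beta)$; it sends $e_I$ to $z^I/\beta_I$ and satisfies $VT_jV^*=M_{z_j}$ for each $j$.

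We first check the two hypotheses of Theorem \ref{thm-dila}.
\begin{itemize}
\item Spherical contractivity transfers, since
\[
\sum_j M_{z_j}^*M_{z_j}=V\Big(\sum_j T_j^*T_j\Big)V^*\leq I .
\]
\item By definition of ``balanced'', the norm on $H^2(\beta)$ admits a slice representation $[\mu,H^2(\gamma)]$. Hence $H^2(\beta)$ is spherically balanced in the sense of Definition \ref{Spherically Balanced- definition}.
\end{itemize}

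Theorem \ref{thm-dila} therefore applies to $M_z$. Its proof gives $\|p(M_z)\|\leq \sup_{\overline{\mathbb B}}\|p\|$ for all matrix-valued polynomials $p$, so the unital homomorphism $p\mapsto p(M_z)$ on the ball algebra is completely contractive. Arveson's dilation theorem then gives a $*$-representation $\pi$ of $C(\partial\mathbb B_d)$ on some $\mathcal K\supset H^2(\beta)$ with $p(M_z)=P_{H^2(\beta)}\pi(p)|_{H^2(\beta)}$. The tuple $U_j=\pi(z_j)$ consists of commuting normal operators, and since $\sum_j|z_j|^2=1$ on $\partial\mathbb B_d$, we get $\sum_j U_j^*U_j=I$. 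So $U$ is a spherical unitary dilating $M_z$.

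Finally we pull the dilation back along $V$. Extend $V$ to a unitary $\tilde V:\mathcal K'\to\mathcal K$ on $\mathcal K'=\mathcal H\oplus(\mathcal K\ominus H^2(\beta))$, acting as the identity on the second summand. Then $\tilde V^*U\tilde V$ is a spherical unitary on $\mathcal K'\supset\mathcal H$. For every $I\in\mathbb Z_+^d$ its compression to $\mathcal H$ is
\[
P_{\mathcal H}\,\tilde V^*U^I\tilde V|_{\mathcal H}=V^*M_z^IV=T^I ,
\]
because $\tilde V$ maps $\mathcal H$ onto $H^2(\beta)$. Hence $T$ admits a spherical dilation.

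The only step needing care is the passage from the norm estimate in the proof of Theorem \ref{thm-dila} to an actual dilation. If that theorem is already read as asserting a spherical dilation, the corollary is purely the transfer of that dilation through the unitary $V$.
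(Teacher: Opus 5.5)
Your proposal is correct and follows the route the paper intends for this "immediate corollary": transfer $T$ to $M_z$ on $H^2(\beta)$ by unitary equivalence, observe that ``balanced'' is by definition ``spherically balanced'', and apply Theorem \ref{thm-dila}. The proof of that theorem only establishes the matrix-valued inequality, and your Arveson step, which upgrades it to a genuine spherical unitary dilation, is the same argument as Proposition \ref{Arveson}(2), so it usefully fills a gap the paper leaves implicit. One cosmetic point: for complex weights the intertwiner should be $Ve_I = z^I/c_I$ with $T^Ie_0 = c_Ie_I$ (so $|c_I| = \beta_I$) rather than $z^I/\beta_I$, which does not affect the rest of your argument.
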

 As evident from the above corollary, every spherical contractive, balanced $d$-variable weighted shift is row contractive (cf. \cite[Remark 2.6]{Ku}). On the other hand, a row contractive balanced $d$-variable weighted shift may not be spherical contractive, for example, the Drury-Arveson shift.
	
Example \ref{Counter-example: column} given below is due to Hartz, arose in a discussion at the OTOA conference in 2018 (ISI Bangalore), shows that the above result cannot be extended to an arbitrary spherically contractive $d$-variable weighted shifts.

\begin{example}\label{Counter-example: column}
Let us see the following diagram: 
	
\tikzset{
vertex/.style={circle,draw, fill=black, inner
        sep=0.1pt},
edge/.style={->,> = latex'}
}

        \centering
\begin{tikzpicture}
[scale=2, vertices/.style={draw, fill=black, circle, inner
        sep=0.5pt}]
\node[vertices, label=below:{$(0,0)$}] (a) at (0,0) {};
\node[vertices, label=below:{$(1,0)$}] (b) at (1,0) {};
\node[vertices,label=left:{$(0,1)$}] (c) at (0,1) {};
\node[vertices, label=below:{$(2,0)$}] (d) at (2,0) {};
\node[vertices,label=left:{$(0,2)$}] (e) at (0,2) {};
\node[vertices, label=below:{$(3,0)$}] (f) at (3,0) {};
\node[vertices,label=left:{$(0,3)$}] (g) at (0,3) {};
\node[vertices ] (h) at (5,0) {};
\node[vertices] (i) at (0,5) {};
\node[vertices] (j) at (1,1) {};
\node[vertices,label=above:\rotatebox{10}{$(1,1)$}]  at (1.2,1) {};
\node[vertices] (k) at (2,1) {};
\node[vertices,label=above:\rotatebox{10}{$(2,1)$}]  at (2.2,1) {};
\node[vertices ] (l) at (3,1) {};
\node[vertices,label=above:\rotatebox{10}{$(3,1)$}]  at (3.2,1) {};
\node[vertices ] (m) at (5,1) {};
\node[vertices] (n) at (1,2) {};
\node[vertices,label=above:\rotatebox{10}{$(1,2)$}]  at (1.2,2) {};
\node[vertices] (o) at (1,3) {};
\node[vertices,label=above:\rotatebox{10}{$(1,3)$}]  at (1.2,3) {};
\node[vertices ] (p) at (1,5) {};
\node[vertices] (q) at (2,2) {};
\node[vertices,label=above:\rotatebox{10}{$(2,2)$}]  at (2.2,2) {};
\node[vertices] (r) at (3,2) {};
\node[vertices,label=above:\rotatebox{10}{$(3,2)$}]  at (3.2,2) {};
\node[vertices ] (s) at (5,2) {};
\node[vertices] (t) at (2,3) {};
\node[vertices,label=above:\rotatebox{10}{$(2,3)$}]  at (2.2,3) {};
\node[vertices ] (u) at (2,5) {};
\node[vertices] (v) at (3,3) {};
\node[vertices,label=above:\rotatebox{10}{$(3,3)$}]  at (3.2,3) {};
\node[vertices ] (w) at (5,3) {};
\node[vertices ] (x) at (3,5) {};

\draw[dashed] (a) -- (b) node[midway, below] {$\frac{1}{\sqrt{2}}$};
\draw[dashed] (a) -- (c) node[midway, left] {$\frac{1}{\sqrt{2}}$};
\draw[dashed] (b) -- (d) node[midway, below] {$\epsilon$};
\draw[dashed] (c) -- (e) node[midway, left] {$\epsilon$};
\draw[dashed] (d) -- (f) node[midway, below] {$\epsilon$};
\draw[dashed] (e) -- (g) node[midway, left] {$\epsilon$};
\draw[dashed] (f) -- (h) ;
\draw[dashed] (g) -- (i) ;
\draw[dashed] (b) -- (j) node[midway, left] {$\alpha$};
\draw[dashed] (d) -- (k) node[midway, left] {$\alpha$};
\draw[dashed] (f) -- (l) node[midway, left] {$\alpha$};
\draw[dashed] (c) -- (j) node[midway, below] {$\alpha$};
\draw[dashed] (e) -- (n) node[midway, below] {$\alpha$};
\draw[dashed] (g) -- (o) node[midway, below] {$\alpha$};
\draw[dashed] (j) -- (k) node[midway, below] {$\epsilon$};
\draw[dashed] (k) -- (l) node[midway, below] {$\epsilon$};
\draw[dashed] (n) -- (q) node[midway, below] {$\epsilon$};
\draw[dashed] (q) -- (r) node[midway, below] {$\epsilon$};
\draw[dashed] (o) -- (t) node[midway, below] {$\epsilon$};
\draw[dashed] (t) -- (v) node[midway, below] {$\epsilon$};
\draw[dashed] (l) -- (m) ;
\draw[dashed] (r) -- (s) ;
\draw[dashed] (v) -- (w) ;
\draw[dashed] (j) -- (n) node[midway, left] {$\epsilon$};
\draw[dashed] (n) -- (o) node[midway, left] {$\epsilon$};
\draw[dashed] (k) -- (q) node[midway, left] {$\epsilon$};
\draw[dashed] (q) -- (t) node[midway, left] {$\epsilon$};
\draw[dashed] (l) -- (r) node[midway, left] {$\epsilon$};
\draw[dashed] (r) -- (v) node[midway, left] {$\epsilon$};
\draw[dashed] (o) -- (p) ;
\draw[dashed] (t) -- (u) ;
\draw[dashed] (v) -- (x) ;
\end{tikzpicture}

Here $\alpha$ and $\epsilon$ are two positive numbers satisfying $\alpha^2+\epsilon^2=1$ and $\epsilon < \frac{1}{\sqrt{2}}$. \vspace{4mm}
\end{example}

Let $T=(T_1,T_2)$ be a two variable weighted shift whose weights are given in the diagram above, then
 \[
T_1 e_I=\left\{ \begin{array}{rl}
   \frac{1}{\sqrt{2}}e_{I+\varepsilon_1}, & \text{ if } I=(0,0)\\
   \alpha e_{I+\varepsilon_1}, & \text{ if } I=(0,k), \,\, k>0,\\
   \epsilon e_{I+\varepsilon_1}, &  \text{ otherwise}
                                   \end{array} \right. 
 \] and
 \[
T_2 e_I=\left\{ \begin{array}{rl}
   \frac{1}{\sqrt{2}}e_{I+\varepsilon_2}, & \text{ if } I=(0,0)\\
   \alpha e_{I+\varepsilon_2}, & \text{ if } I=(k,0), \,\, k>0,\\
   \epsilon e_{I+\varepsilon_2}, &  \text{ otherwise.}
                                   \end{array} \right. 
 \]
It is easy to verify that $(T_1,T_2)$  is a commuting pair of bounded linear operators on $\ell^2(\mathbb{Z}_+^2).$ If $I=(i_1,i_2),$ such that $i_1, i_2>0$, then $w^2_{I,1} + w^2_{I,2}=2 \epsilon^2 < 1.$ Hence $(T_1,T_2)$ is a spherical contraction. 
Consider the polynomial $p(z_1,z_2)= 2z_1 z_2.$ Since $\alpha > \frac{1}{\sqrt{2}},$ we have 
$$\|p(T_1,T_2)\|\geq \|2T_1T_2e_{(0,0)}\| = \|\sqrt{2} \alpha e_{(1,1)}\| > 1=\|p\|_{\infty}.$$
This shows that $(T_1,T_2)$ does not satisfy the von Neumann inequality. 
Note that $(T_1,T_2)$ is not a row contraction, as
\[\|T_1e_{(0,1)} + T_2e_{(1,0)}\|^2 = \|2\alpha e_{(1,1)}\|^2 = 4\alpha^2 > 2=\|e_{(0,1)}\|^2 + \|e_{(1,0)}\|^2.\]

\begin{example}\label{Counter-example: row}
Let us see the following diagram:

\tikzset{
  vertex/.style={circle,draw, fill=black, inner sep=0.1pt},
  edge/.style={->,> = latex'}
}

\begin{center}
\begin{tikzpicture}[scale=2, vertices/.style={draw, fill=black, circle, inner sep=0.5pt}]
    \node[vertex, label=below:{$(0,0)$}] (a) at (0,0) {};
    \node[vertex, label=below:{$(1,0)$}] (b) at (1,0) {};
    \node[vertex, label=left:{$(0,1)$}] (c) at (0,1) {};
    \node[vertex, label=below:{$(2,0)$}] (d) at (2,0) {};
    \node[vertex, label=left:{$(0,2)$}] (e) at (0,2) {};
    \node[vertex, label=below:{$(3,0)$}] (f) at (3,0) {};
    \node[vertex, label=left:{$(0,3)$}] (g) at (0,3) {};
    \node[vertex] (h) at (5,0) {};
    \node[vertex] (i) at (0,5) {};
    \node[vertex] (j) at (1,1) {};
    \node[vertex, label=above:\rotatebox{10}{$(1,1)$}] (j2) at (1.2,1) {};
    \node[vertex] (k) at (2,1) {};
    \node[vertex, label=above:\rotatebox{10}{$(2,1)$}] (k2) at (2.2,1) {};
    \node[vertex] (l) at (3,1) {};
    \node[vertex, label=above:\rotatebox{10}{$(3,1)$}] (l2) at (3.2,1) {};
    \node[vertex] (m) at (5,1) {};
    \node[vertex] (n) at (1,2) {};
    \node[vertex, label=above:\rotatebox{10}{$(1,2)$}] (n2) at (1.2,2) {};
    \node[vertex] (o) at (1,3) {};
    \node[vertex, label=above:\rotatebox{10}{$(1,3)$}] (o2) at (1.2,3) {};
    \node[vertex] (p) at (1,5) {};
    \node[vertex] (q) at (2,2) {};
    \node[vertex, label=above:\rotatebox{10}{$(2,2)$}] (q2) at (2.2,2) {};
    \node[vertex] (r) at (3,2) {};
    \node[vertex, label=above:\rotatebox{10}{$(3,2)$}] (r2) at (3.2,2) {};
    \node[vertex] (s) at (5,2) {};
    \node[vertex] (t) at (2,3) {};
    \node[vertex, label=above:\rotatebox{10}{$(2,3)$}] (t2) at (2.2,3) {};
    \node[vertex] (u) at (2,5) {};
    \node[vertex] (v) at (3,3) {};
    \node[vertex, label=above:\rotatebox{10}{$(3,3)$}] (v2) at (3.2,3) {};
    \node[vertex] (w) at (5,3) {};
    \node[vertex] (x) at (3,5) {};

    \draw[dashed] (a) -- (b) node[midway, below] {$0.75$};
    \draw[dashed] (a) -- (c) node[midway, left] {$1$};
    \draw[dashed] (b) -- (d) node[midway, below] {$0.6$};
    \draw[dashed] (c) -- (e) node[midway, left] {$0.8$};
    \draw[dashed] (d) -- (f) node[midway, below] {$0.6$};
    \draw[dashed] (e) -- (g) node[midway, left] {$0.8$};
    \draw[dashed] (f) -- (h) node[midway, below] {$0.6$};
    \draw[dashed] (g) -- (i) node[midway, left] {$0.8$};
    \draw[dashed] (b) -- (j) node[midway, left] {$0.8$};
    \draw[dashed] (d) -- (k) node[midway, left] {$0.8$};
    \draw[dashed] (f) -- (l) node[midway, left] {$0.8$};
    \draw[dashed] (c) -- (j) node[midway, below] {$0.6$};
    \draw[dashed] (e) -- (n) node[midway, below] {$0.6$};
    \draw[dashed] (g) -- (o) node[midway, below] {$0.6$};
    \draw[dashed] (j) -- (k) node[midway, below] {$0.6$};
    \draw[dashed] (k) -- (l) node[midway, below] {$0.6$};
    \draw[dashed] (n) -- (q) node[midway, below] {$0.6$};
    \draw[dashed] (q) -- (r) node[midway, below] {$0.6$};
    \draw[dashed] (o) -- (t) node[midway, below] {$0.6$};
    \draw[dashed] (t) -- (v) node[midway, below] {$0.6$};
    \draw[dashed] (l) -- (m) node[midway, below] {$0.6$};
    \draw[dashed] (r) -- (s) node[midway, below] {$0.6$};
    \draw[dashed] (v) -- (w) node[midway, below] {$0.6$};
    \draw[dashed] (j) -- (n) node[midway, left] {$0.8$};
    \draw[dashed] (n) -- (o) node[midway, left] {$0.8$};
    \draw[dashed] (k) -- (q) node[midway, left] {$0.8$};
    \draw[dashed] (q) -- (t) node[midway, left] {$0.8$};
    \draw[dashed] (l) -- (r) node[midway, left] {$0.8$};
    \draw[dashed] (r) -- (v) node[midway, left] {$0.8$};
    \draw[dashed] (o) -- (p);
    \draw[dashed] (t) -- (u);
    \draw[dashed] (v) -- (x);
\end{tikzpicture}
\end{center}

\end{example}
Let $T=(T_1,T_2)$ be a two variable weighted shift with weights given as in the preceding diagram, then
 \[
T_1 e_I=\left\{ \begin{array}{rl}
   \frac{3}{{4}}e_{I+\varepsilon_1}, & \text{ if } I=(0,0)\\
   
   0.6 e_{I+\varepsilon_1}, &  \text{ otherwise}
                                   \end{array} \right. 
 \] and
 \[
T_2 e_I=\left\{ \begin{array}{rl}
   1e_{I+\varepsilon_2}, & \text{ if } I=(0,0)\\
   
   0.8  e_{I+\varepsilon_2}, &  \text{ otherwise}.
                                   \end{array} \right. 
 \]
A routine verification shows that $(T_1,T_2)$ is a row contractive commuting tuple of bounded linear operators on $\ell^2(\mathbb Z_+^2).$
We contend that $(T_1,T_2)$ does not satisfy the von Neumann's inequality. If $p(z_1,z_2)= 2z_1 z_2,$ then
\[ \|p(T_1, T_2)\|\geq \|2T_1T_2e_{(0,0)}\| = \|1.2 e_{(1,1)}\| > 1. \] 
On the other hand, $\|p\|_{ \infty}=1$. Thus $(T_1,T_2)$ does not satisfy the von Neumann inequality. Note that $(T_1,T_2)$ is not a spherical contraction, because if we choose $h=e_{0,0},$ then $\|T_1e_{(0,0)}\|^2 +\| T_2e_{(0,0)}\|^2 = (0.75)^2+1 = 1.5625 > 1.$

\section{Proof of Theorem \ref{Main Theorem}}
In this section, we proceed to prove  Theorem \ref{Main Theorem}.
The proof present here is motivated by the techniques in \cite{Hz}.
Fix \( N \in \mathbb{Z}_+ \) and define
\[
\mathcal{I} = \{ (I,j) \in \mathbb{Z}_+^d \times \{1, \dots, d\} : |I| \leq N \}.
\]
Let \( X \) denote the closure of the set of all \( ({w}_{I,j})_{(I,j) \in \mathcal{I}} \) satisfying commuting relations \eqref{commutation relation} such that for all $I\in\mathbb{Z}_+^d$ such that
\[
0 <  |\mathrm{w}_{I,j}| \leq m_j  \quad  \text{and} \sum_{j=1}^d m_j^2 \leq1
\]
for all \( (I,j) \in \mathcal{I} \). We may regard \( X \) as a compact subset of \( \mathbb{C}^{|\mathcal{I}|} \). Define
\[
X_0 = \left\{ (\mathrm{w}_{I,j}) \in X : 
 |\mathrm{w}_{I,j}| = m_j,\quad
\sum_{j=1}^d m_j^2 = 1 
\right\}
\]
For any compact set $S \subset \mathbb{C}^N,$ let $\partial_0 S$ denote the Shilov boundary of the algebra of all analytic functions on $S$. That is, $\partial_0 S$ is the smallest compact subset $K \subset S$ such that 
\[
\sup\{|f(z)| : z \in S\} = \sup\{|f(z)| : z \in K\}
\]
for every analytic function $f$ on $S$.

\begin{lemma}\label{Shilov}
The Shilov boundary of \( X \) is contained in $X_0.$
\end{lemma}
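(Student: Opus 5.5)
The plan is to reduce to the corresponding lemma in \cite{Hz}, where the Shilov boundary of the set of contractive commuting weights is shown to lie in the set of unimodular weights. Then a scaling argument, which uses the fact that the commutation relations \eqref{commutation relation} are homogeneous, pushes the parameters $m_j$ out to the sphere $\sum_j m_j^2=1$. Since $\partial_0 X$ is the smallest closed boundary, it suffices to prove the following: for every $f$ analytic on (a neighbourhood of) $X$ and every $w\in X$, there is $w'\in X_0$ with $|f(w')|\geq |f(w)|$. By density and continuity, we may take $w$ with all coordinates nonzero.

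\textbf{Step 1 (parametrisation by $\beta$).} For nonzero weights, the commutation relations are equivalent to the existence of nonzero complex numbers $(\beta_I)_{|I|\le N+1}$ with $\beta_0=1$ and $w_{I,j}=\beta_{I+\epsilon_j}/\beta_I$. In these coordinates the constraints $|w_{I,j}|\le m_j$ read $|\beta_{I+\epsilon_j}|\le m_j|\beta_I|$. The function $g(\beta)=f(w(\beta))$ is analytic in the $\beta$'s on this Reinhardt-type region. We fix $w$ and work through the multi-indices $K$ in increasing order of $|K|$. We replace $\beta_K$ by $\lambda\beta_K$ with $\lambda\in\mathbb C$ and keep the other $\beta$'s fixed. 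Only the weights $w_{K-\epsilon_j,j}$ (scaled by $\lambda$) and $w_{K,j}$ (scaled by $\lambda^{-1}$) change. The admissible set of $\lambda$ is therefore a closed annulus (or a disc when $|K|=N+1$). By the maximum modulus principle, $|g|$ does not decrease if we move $\lambda$ to the boundary circle. On that circle, some constraint $|w_{K-\epsilon_j,j}|=m_j$ becomes an equality.

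\textbf{Step 2 (iteration).} Repeating this with the weights rescaled, $w_{I,j}=m_ju_{I,j}$, so that $u$ satisfies the same commutation relations with $|u_{I,j}|\le1$, puts us exactly in Hartz's setting. His lemma gives a point of the same $f$-modulus with $|u_{I,j}|=1$ for all $(I,j)\in\mathcal I$, that is, $|w_{I,j}|=m_j$. \emph{This is the main obstacle}: one has to order the moves, namely saturate level $|K|=1,2,\dots$ in turn and use the torus action $\beta_K\mapsto e^{i\theta}\beta_K$, so that earlier equalities are not destroyed. I would copy Hartz's inductive bookkeeping verbatim after the rescaling by $m_j$.

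\textbf{Step 3 (scaling to the sphere).} Now $|w_{I,j}|=m_j$ for all $(I,j)\in\mathcal I$, and $s:=\sum_j m_j^2\le 1$. Because \eqref{commutation relation} is homogeneous of degree $2$, the point $\lambda w$ again satisfies the commutation relations for any $\lambda\in\mathbb C$. It lies in $X$ with parameters $|\lambda| m_j$ whenever $|\lambda|\le s^{-1/2}$. The function $\lambda\mapsto f(\lambda w)$ is analytic on the disc $|\lambda|\le s^{-1/2}$, so its modulus is maximised at some $\lambda_0$ with $|\lambda_0|=s^{-1/2}$. The point $\lambda_0 w$ has $|\lambda_0 w_{I,j}|=m_j'$ with $\sum_j m_j'^2=1$, so $\lambda_0 w\in X_0$ and $|f(\lambda_0 w)|\ge|f(w)|$. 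Hence $X_0$ is a closed boundary, and it contains $\partial_0 X$.
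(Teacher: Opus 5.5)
Your argument is correct, but it is organised differently from the paper's.

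\textbf{The paper's route.} The paper does not cite Hartz's lemma as a black box. Instead it re-runs Hartz's proof with $m_j$ in place of $1$:
\begin{itemize}
\item $I$ is called good if $\|T^Ie_0\|=m_1^{i_1}\cdots m_d^{i_d}$, and $(I,j)$ is scalable if $I$ is good and $I+\epsilon_j$ is bad.
\item All scalable weights are multiplied simultaneously by $t$ in the closed disc of radius $r=(\max_{\mathrm{scalable}}|w_{I,j}|/m_j)^{-1}>1$. The commutation relations survive because each side of a relation contains the same number (zero or one) of scalable factors.
\item The maximum modulus principle then strictly enlarges the set of saturated weights, and iteration gives a point with $|w_{I,j}|=m_j$.
\end{itemize}

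\textbf{Your route.} You get the same conclusion with no new work. The diagonal map $u_{I,j}\mapsto m_ju_{I,j}$ is invertible, since nonzero weights force $m_j>0$. It preserves the commutation relations and carries Hartz's compact set into $X$. So Hartz's lemma, applied to $f$ composed with this linear map, does the job.

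Your Step 3 scales the whole point by $\lambda$ with $|\lambda|\le s^{-1/2}$, using the degree-two homogeneity of the relations. This makes explicit the normalisation $\sum_j m_j^2=1$, which the paper's proof of the lemma leaves implicit: that proof only reaches $|w_{I,j}|=m_j$, and the sphere is dealt with inside Proposition~\ref{Dilation for X_0}. Step 3 relies on reading $X$ as the union over all admissible $(m_1,\ldots,m_d)$. That is the right reading, since with the $m_j$ held fixed and $\sum_j m_j^2<1$ the set $X_0$ would be empty.

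\textbf{Comparison.} Your route is shorter and shows the lemma is a formal corollary of Hartz's. The paper's route is self-contained, and its scalable-weight construction is exactly what is reused in Lemma~\ref{Sperical unitary dilation for truncated weighted shift}.

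\textbf{Clean-up.} You should drop Step 1 and the ``main obstacle'' remark. They are unnecessary once you cite Hartz after rescaling, and the level-by-level annulus argument would not close on its own:
\begin{itemize}
\item The maximum of $|g|$ on the annulus of admissible $\lambda$ may lie on the inner circle. That saturates an outgoing weight $w_{K,j}$ while shrinking the incoming weights $w_{K-\epsilon_i,i}$, which can undo earlier equalities.
\item The rotations $\beta_K\mapsto e^{i\theta}\beta_K$ do not change any moduli, so they cannot repair this.
\end{itemize}
Hartz's actual bookkeeping is the disc argument over scalable pairs described above. Finally, his lemma yields a point of \emph{at least} the same $f$-modulus, not the same modulus.
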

The proofs of Lemmas \ref{Shilov} and \ref{Sperical unitary dilation for truncated weighted shift} follow the same line of argument; accordingly, the proof of Lemma \ref{Shilov} is incorporated into that of Lemma \ref{Sperical unitary dilation for truncated weighted shift}. The proof is based on the following idea.

Let \( \mathrm{w} = (w_{I,j}) \in X \setminus X_0 \) with \( w_{I,j} \neq 0 \) for all \( (I, j) \in \mathcal{I} \), and let
\( f : X \to \mathbb{C} \) be a function which extends to be analytic in a neighbourhood of \( X \).
We will show that there exists a point \(\tilde{ \mathrm{w}} = (\tilde{w}_{I,j}) \in X \) with \( \tilde{w}_{I,j} \neq 0 \) for all \( (I, j) \in \mathcal{I} \) such that
\[
|f(\mathrm{w})| \leq |f(\tilde{\mathrm{w}})|
\]
and such that

\[\Big\{
(I, j) \in \mathcal{I} :
 |w_{I,j}| = m_j,\ 
\sum_{j=1}^d  m_j^2 = 1
\Big\} 
\subsetneq 
\Big\{
(I, j) \in \mathcal{I} :
 |\tilde{w}_{I,j}| = m_j,\
\sum_{j=1}^d m_j^2 = 1
\Big\}\]

Once this has been accomplished, iterating this process finitely many times
yields a point \( \mathrm{v} \in X_0 \) such that \( |f(\mathrm{w})| \leq |f(\mathrm{v})| \).
Consequently, \( X_0 \) is a boundary for the algebra of all analytic functions on \( X \), so
\[
\partial_0 X \subset X_0.
\]

\begin{remark}
    It is worth mentioning that, if $d=2,$ then the set $X_0$ is precisely the set
    \[\big\{(w_{I,j}): |w_{I,j}|=m_j, m_1^2+m_2^2=1, m_1>0 \mbox{ }m_2>0\big\}.\]
To see this, let \( \tilde{w}_{0,1} = m_1 \), \( \tilde{w}_{0,2} = m_2 \), \( \tilde{w}_{\epsilon_2,1} = x \), and \( \tilde{w}_{\epsilon_1,2} = y \). Using commutativity, we get \( m_2 \cdot x = m_1 \cdot y \), and with \( m_1^2 + m_2^2 = 1 \) and \(x^2+y^2 =1\), we get:
\[
\frac{m_2^2}{m_1^2} = \frac{y^2}{x^2} \Rightarrow \frac{m_2^2 + m_1^2}{m_1^2} = \frac{x^2 + y^2}{x^2} \Rightarrow x = m_1, \, y = m_2.\]    
Similarly, we get $w_{I,1}=m_1$ and $w_{I,2}=m_2$ for each $I\in\mathbb Z_+^2.$    
\end{remark}
Now, we continue with the development of the proof of  Theorem \ref{Main Theorem}. The proof of the following proposition is same (after required modification) as in \cite[Proposition 2.1]{Hz}. We present the proof for the sake of completeness.
\begin{proposition}\label{Arveson}
    Let \(X \subset\mathbb{C}^N  \) be compact, and suppose that \(T: X\to \mathcal{B}(\mathcal{H)}^d\) is an analytic function such that \(T(z)\) is a d-tuple of commuting bounded operators  for all \(z\in X\). Then the following statements are true: 
    \begin{enumerate}
        \item If the tuple \(T(z)\) satisfies  von Neumann's inequality over $\mathbb B_d$ for all \(z\in \partial_0 X\) then $T(z)$ satisfies von Neumann's inequality over $\mathbb B_d$ for all $z\in  X$.
        \item If the tuple \(T(z)\) dilates to a tuple of commuting spherical unitaries for all \(z\in \partial_0 X\) then $T(z)$ dilates to a tuple of commuting spherical unitaries for all $z\in X$.
    \end{enumerate}
\end{proposition}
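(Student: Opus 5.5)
We follow the scheme of \cite[Proposition 2.1]{Hz}: encode both conclusions as norm inequalities for matrix-valued polynomials, and then use a maximum-modulus argument over the Shilov boundary. Concretely, for a commuting tuple $\boldsymbol S$, von Neumann's inequality over $\mathbb B_d$ says
\[
\|p(\boldsymbol S)\|\le \|p\|_{\mathbb B_d,\infty}\quad\text{for all scalar polynomials } p.
\]
By the Arveson dilation theorem, $\boldsymbol S$ admits a spherical dilation if and only if the unital homomorphism $p\mapsto p(\boldsymbol S)$ is completely contractive on the ball algebra. The reason is that the Shilov boundary of $\mathbb B_d$ is $\partial\mathbb B_d$, and a normal commuting tuple with joint spectrum in $\partial \mathbb B_d$ is exactly a spherical unitary. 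So the second condition reads
\[
\|p(\boldsymbol S)\|\le \sup_{w\in\overline{\mathbb B}_d}\|p(w)\|_{\mathcal B(\mathbb C^n)}
\]
for every $n$ and every $n\times n$ matrix polynomial $p$. This is the same reformulation used implicitly in the proof of Theorem \ref{thm-dila}.

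Both statements therefore reduce to one claim. Fix $n$, a matrix polynomial $p$, and unit vectors $x,y\in\mathcal H\otimes\mathbb C^n$. Consider
\[
F(z)=\langle p(T(z))x,y\rangle, \qquad z\in X.
\]
Since $z\mapsto T(z)$ is analytic near $X$ and $p(T(z))$ is a finite sum of finite products of the analytic operator-valued functions $T_j(z)$, $F$ is a scalar analytic function on a neighbourhood of $X$. By the definition of $\partial_0 X$,
\[
|F(z)|\le \sup_{\zeta\in\partial_0X}|F(\zeta)|\le \sup_{\zeta\in\partial_0X}\|p(T(\zeta))\|\le \|p\|_{\overline{\mathbb B}_d,\infty}
\]
for every $z\in X$, where the last inequality is the hypothesis. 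Taking the supremum over $x,y$ gives $\|p(T(z))\|\le\|p\|_\infty$ for all $z\in X$. With $n=1$ this proves (1). For all $n$ it gives complete contractivity, and the converse direction of Arveson's theorem then gives (2).

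The main obstacle is the analyticity bookkeeping. One must make precise what ``analytic on the compact set $X$'' means, namely analytic on a neighbourhood $U\supset X$, and check that commutativity, which holds only on $X$, is not needed off $X$. It is not, because $p(T(z))$ can be defined on $U$ via a fixed ordering of the monomials; that gives an analytic function on $U$, and it agrees with the genuine functional calculus on $X$. One should also confirm that the maximum principle for the Shilov boundary applies to such functions restricted to $X$, which is exactly how $\partial_0X$ was defined above. The Arveson equivalence is quoted, not reproved.
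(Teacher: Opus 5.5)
Your proposal is correct and follows essentially the same route as the paper. Both reduce the two parts to the matrix-valued inequality $\|p(T(z))\|\le\|p\|_\infty$, carry it from $\partial_0X$ to all of $X$ through the analytic scalar functions $z\mapsto\langle p(T(z))x,y\rangle$, take $n=1$ for (1), and invoke Arveson's extension theorem (with Stinespring) for (2). Your remark about defining $p(T(z))$ off $X$ through a fixed ordering of the monomials is harmless extra bookkeeping that the paper leaves implicit.
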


\begin{proof}
Let \( p = (p_{i,j})_{1 \leq i,j \leq n} \) be an \( n \times n \) matrix of polynomials in \( \mathbb{C}[z_1, \dots, z_d] \), and suppose that the inequality $\|p(T(z))\|_{B(H^{\oplus n})} \leq \|p\|_\infty$
holds for all \( z \in \partial_0 X \), where \( \|p\|_\infty = \sup \{ \|p(w)\|_{M_n} : w \in\overline{\mathbb B}_d \} \). 
Given \( f, g \in H^{\oplus n} \) of norm 1, observe that the scalar-valued function
\[
X \to \mathbb{C}, \quad z \mapsto \langle p(T(z))f, g \rangle,
\]
is analytic. By assumption, this function is bounded by \( \|p\|_\infty \) on \( \partial_0 X \), and hence on \( X \) by the definition of \( \partial_0 X \). Consequently, the inequality
$\|p(T(z))\|_{B(H^{\oplus n})} \leq \|p\|_\infty$
holds for all \( z \in X \). Part (1) now follows by taking \( n = 1 \) above.

To prove the second part, let's assume that $T(z)$  dilates to a $d$-tuple of commuting spherical unitaries  for all $z\in \partial _0X$. Then $T(z)$  satisfies matrix-valued von Neumann's inequality over $\mathbb B_d$ for all $z \in \partial_0 X.$ By part (1), it also satisfies matrix-valued von Neumann's inequality for all $z\in X.$ Therefore, for every $z\in X,$ it follows that the map \( p\mapsto p(T(z))\) from $M_n(p[z_1,\dots,z_d])\to M_n(B(H))$ is completely contractive which is also unital homomorphism. Therefore by Arveson's extension theorem 
 \cite[Corollary 7.7]{VP}, 
 we can extend this homomorphism to $*$-homomorphism on the space of continuous functions on $\partial \mathbb B_d.$ Hence, for every $z\in X,$ we get a spherical unitary dilation for $T(z)$. 
 \end{proof}
The following result also follows from Corollary \ref{Balanced - Dilation} as it is a balanced weighted shift. However, below we provide a direct proof.
\begin{proposition}\label{Dilation for X_0}
    Let $\boldsymbol T=(T_1, \ldots, T_d)$ be a doubly contractive $d$-variable weighted shift with weights $\{w_{I,j}: I\in\mathbb{Z}_+^d, j\in\{1,\ldots, d\}\}$ satisfying $w_{I,j}=m_j$ for some real numbers $m_1,\ldots,m_d,$ and $\sum_{j=1}^{d} m_j^2 \leq 1.$
Then $\boldsymbol T$ admits a spherical dilation. In particular, $\boldsymbol T$ satisfies von Neumann’s inequality.
\end{proposition}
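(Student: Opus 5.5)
Since the weights are constant, $w_{I,j}=m_j$ for all $I$, each $T_j$ is $m_j$ times the unilateral shift in the $j$-th coordinate of $\ell^2(\mathbb Z_+^d)\cong \ell^2(\mathbb Z_+)^{\otimes d}$. Concretely,
\[
T_j = m_j\, I\otimes\cdots\otimes S\otimes\cdots\otimes I ,
\]
where $S$ is the unilateral shift placed in the $j$-th slot. We may assume $m_j\ge 0$: replacing $m_j$ by $|m_j|$ gives a unitarily equivalent tuple, by the lemma of \cite{J-L} when the weights are nonzero, and trivially when some $m_j=0$. This choice also satisfies the commutation relations \eqref{commutation relation} automatically.

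The plan is to dilate in two stages.

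\textbf{Stage 1 (commuting unitaries).} Let $U$ be the bilateral shift on $\ell^2(\mathbb Z)$, and on $\mathcal K_1=\ell^2(\mathbb Z^d)$ put
\[
V_j=I\otimes\cdots\otimes U\otimes\cdots\otimes I .
\]
The $V_j$ are commuting unitaries, and $\mathcal H=\ell^2(\mathbb Z_+^d)$ is invariant under every $V_j$, with $V_j|_{\mathcal H}=S_j$. Hence $T^I=m^I S^I = P_{\mathcal H}(m_1V_1,\dots,m_dV_d)^I|_{\mathcal H}$ for all $I\in\mathbb Z_+^d$. The tuple $N=(m_1V_1,\ldots,m_dV_d)$ consists of commuting normal operators, and its joint spectrum is the torus
\[
\{z: |z_j|=m_j\}\subset \overline{\mathbb B}_d,
\]
because $\sum m_j^2\le 1$. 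So $N$ is a normal tuple with spectrum in the closed ball, and $N$ is a joint extension of $T$.

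\textbf{Stage 2 (spherical unitary).} It remains to dilate a commuting normal tuple $N$ with $\sum N_j^*N_j = (\sum m_j^2)I\le I$ to a spherical unitary. The cleanest route is through the functional calculus. The map $p\mapsto p(N)$ extends to a unital $*$-homomorphism of $C(\sigma(N))$, and restricting functions gives $C(\overline{\mathbb B}_d)\to C(\sigma(N))$. Composing yields a unital completely positive map
\[
\Phi:C(\overline{\mathbb B}_d)\to\mathcal B(\mathcal K_1)
\]
with $\Phi(p)=p(N)$. Compressing to $\mathcal H$ gives a ucp map $\Psi(f)=P_{\mathcal H}\Phi(f)|_{\mathcal H}$ with $\Psi(z^I)=T^I$. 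We then need to replace $\overline{\mathbb B}_d$ by $\partial\mathbb B_d$. Every point of the closed ball is a barycenter of a probability measure on the sphere; for instance, $z$ is the average of $z+\sqrt{1-|z|^2}\,\zeta e$ over a suitable circle of directions $e$ orthogonal to $z$ (when $d\ge2$). This yields a Poisson-type ucp map $C(\partial\mathbb B_d)\to C(\overline{\mathbb B}_d)$ fixing every polynomial, namely integration of $f$ against the harmonic measure $\nu_z$ with $\int p\,d\nu_z=p(z)$. Composing with $\Psi$ gives a ucp map $C(\partial \mathbb B_d)\to\mathcal B(\mathcal H)$ sending $z^I\mapsto T^I$. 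Stinespring's theorem then produces a $*$-representation $\pi$ of $C(\partial\mathbb B_d)$, and $U_j=\pi(z_j)$ is a spherical unitary dilating $T$.

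Alternatively, Stage 2 can be done concretely. Since the joint spectrum is the torus of radii $m_j$, it suffices to dilate the point $(m_1,\dots,m_d)$, or the scalar rotations thereof, to the sphere. Take $m_{d+1}'$-type padding, for example taking $m_1$ to $\sqrt{1-\sum_{j\ge2}m_j^2}$ via a one-variable unitary dilation of the contraction $m_1/\sqrt{1-\sum_{j\ge 2}m_j^2}\,V_1$, which commutes with the other $V_j$. This makes the sum of squares exactly $1$. I would present this version: tensor an Sz.-Nagy dilation of the scalar $r=m_1/\sqrt{1-\sum_{j\ge2}m_j^2}\le1$, as a unitary on $\ell^2(\mathbb Z)$, against $V_1$. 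The resulting tuple has $U_1^*U_1=(1-\sum_{j\ge2}m_j^2)I$ and is a spherical unitary.

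The main obstacle is Stage 2, specifically ensuring that the compression relation $T^I=P_{\mathcal H}U^I|_{\mathcal H}$ holds for all multi-indices $I$ simultaneously, and not merely for powers of $U_1$ alone. In the concrete version this needs the scalar dilation of $r$ to be a power dilation, which Sz.-Nagy provides, and it needs the tensor structure to keep commutativity and normality, which it does because the other factors act on separate tensor slots. Once the spherical dilation is in hand, von Neumann's inequality follows by compression, since $\|p(U)\|=\sup_{\sigma(U)}|p|\le\|p\|_{\mathbb B_d,\infty}$.
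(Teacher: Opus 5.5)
Your proof is correct, but it takes a genuinely different route from the paper. The paper multiplies the $k$-th weights by a complex parameter $t$ with $|t|\le r=\tilde m_k/m_k$. For $|t|=r$ the tuple is a spherical isometry, so Athavale's theorem \cite{A.A} gives a spherical unitary dilation there. The maximum modulus principle then transfers the (matrix-valued) von Neumann inequality back to $t=1$, and the dilation at $t=1$ is recovered as in Proposition \ref{Arveson}, via Arveson's extension theorem and Stinespring.

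You replace Athavale's theorem by the explicit normal extension $N=(m_1V_1,\dots,m_dV_d)$. You also scale \emph{down} rather than up: $U_1=\tilde m_1\,(W\otimes V_1)$ and $U_j=m_j\,(I\otimes V_j)$ for $j\ge 2$, where $W$ is a Sz.-Nagy unitary dilation of the scalar $m_1/\tilde m_1\le 1$ at a unit vector $\xi$. This is the dilation-level counterpart of the paper's maximum modulus step, which is just von Neumann's inequality for the scalar contraction $1/r$. So you obtain an explicit spherical unitary on $\ell^2(\mathbb Z)\otimes\ell^2(\mathbb Z^d)$ with no appeal to Athavale or Arveson. The identity $P_{\mathcal H}U^I|_{\mathcal H}=T^I$ follows at once from $\langle W^n\xi,\xi\rangle=(m_1/\tilde m_1)^n$ and the invariance of $\mathcal H$ under the $V_j$.

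What the paper's version buys is uniformity with the Hartz-type scaling and Shilov-boundary argument of Lemma \ref{Sperical unitary dilation for truncated weighted shift}, where no explicit normal extension is at hand. Yours is more elementary. In its abstract form it shows more generally that a subnormal tuple whose normal extension has spectrum in $\overline{\mathbb B}_d$ has a spherical dilation.

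Two small points. First, if $\sum_{j\ge2}m_j^2=1$ then $\tilde m_1=0$ and your ratio is undefined; but then $m_1=0$ and $N$ itself is already a spherical unitary. Second, in the abstract Stage 2 you need $z\mapsto\nu_z$ to be continuous on $\sigma(N)$, or at least Borel if you use the Borel functional calculus of $N$. The invariant Poisson measures on $\partial\mathbb B_d$ provide this, whereas a continuous choice of unit vectors $e(z)\perp z$ is not automatic.
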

\begin{proof}
    Let $m_k = \min\{m_j : j = 1, 2, \ldots, d\}$. If the minimum occurs for multiple values, choose one arbitrarily; . Define
\[
\tilde{m}_k = \sqrt{1 - \sum_{j \neq k} m_j^2}, \quad \text{and} \quad r = \frac{\tilde{m}_k}{m_k}\geq 1,
\]
\textbf{Case 1.} If $r=1$ in that case $\sum m_j^2 =1$.Thus $\boldsymbol T$ is spherical isometry then by \cite{A.A} we obtain a spherical unitary dilation, and hence von Neumann's inequality holds.\\
\textbf{Case 2.}If $r>1$ .
We consider $w_{I,j} = m_j$ for all $I$ and for all $j = 1, 2, \ldots, d$. Let $\mathbb{D}_r(0) \subset \mathbb{C}$ denote the closed disc of radius $r$ centered at $0$. For $(I,j) \in \mathbb{Z}_+^d \times \{1, 2, \ldots, d\}$, define
\[
\hat{w}_{I,j}(t) = 
\begin{cases}
t \cdot w_{I,j} & \text{if } w_{I,k} = m_k, \\
w_{I,j} & \text{otherwise},
\end{cases}
\]
and let $\hat{w}(t) = (\hat{w}_{I,j}(t))_{(I,j)\in \mathbb{Z}_+^d \times \{1,\ldots,d\}}$.
We finish the proof by showing that $\hat{w}(t) \in X$ for every $t \in \mathbb{D}_r(0)$. Indeed, it then follows from the maximum modulus principle that there exists $t_0 \in \partial \mathbb{D}_r(0)$ and  for every holomorphic $f:X \to \mathbb{C}$ such that
\[
|f(w)| = |f(\hat{w}(1))| \leq |f(\hat{w}(t_0))|.
\]
Setting $\hat{w} = \hat{w}(t_0)$, we obtain a point $\hat{w} \in X_0$ such that all weights are $\tilde{m}_j$ in the respective directions. This transformation preserves commutativity, as well as the row and column contractivity conditions.
By \cite{A.A}, we then obtain a spherical unitary dilation, and hence von Neumann’s inequality holds. 
\end{proof}

Thus, in order to prove Theorem \ref{Main Theorem}, it suffices to prove the following lemma. 
\begin{lemma}\label{Sperical unitary dilation for truncated weighted shift}
Suppose $m_1,\ldots, m_d$ are positive real numbers such that \(\sum_{j=1}^d m_j^2 \leq1\).
Let $\boldsymbol T=(T_1,\ldots, T_d)$ be a \( d \)-variable doubly contractive truncated weighted shift with weights $(w_{I,j})$ such that \( w_{I,j}\in (0, m_j] \) for all $I\in\mathbb{Z}_+^d$ and $j\in\{1,\ldots, d\}.$ Then $\boldsymbol T$  dilates to a \( d \)-tuple of commuting spherical unitaries.
\end{lemma}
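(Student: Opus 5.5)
We follow Hartz's scheme from \cite{Hz}: view the truncated shift as an analytic function of its weights and push the weights to the boundary. Fix $N$, and regard the truncated weighted shift on $\mathcal H_{N+1}$ as the analytic map $\mathrm w\mapsto \boldsymbol T(\mathrm w)$ from the compact set $X\subset\mathbb C^{|\mathcal I|}$ of Section 3 into $\mathcal B(\mathcal H_{N+1})^d$. Each $T_j(\mathrm w)$ is linear in the weights and the tuple commutes on $X$, so Proposition \ref{Arveson}(2) applies. It therefore suffices to do two things. First, prove Lemma \ref{Shilov}, namely $\partial_0 X\subset X_0$. Second, show that for $\mathrm v\in X_0$ the truncated shift $\boldsymbol T(\mathrm v)$ dilates to a commuting spherical unitary.

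For the second point we argue as follows. A point of $X_0$ has $|v_{I,j}|=m_j$ with $\sum m_j^2=1$. By the unitary equivalence lemma of \cite{J-L} (applied to the finitely many nonzero weights), $\boldsymbol T(\mathrm v)$ is unitarily equivalent to the truncated shift with constant weights $m_j$. This truncation is the compression of the untruncated constant-weight shift of Proposition \ref{Dilation for X_0} (Case 1) to the co-invariant subspace $\mathcal H_{N+1}$. Indeed, $\mathcal H_{N+1}^\perp=\operatorname{span}\{e_I:|I|\ge N+2\}$ is invariant under every $T_j$, so monomials in the compression equal compressions of monomials. That shift is a spherical isometry, hence has a spherical unitary dilation by \cite{A.A}. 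Composing dilations then gives one for $\boldsymbol T(\mathrm v)$.

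The main work is Lemma \ref{Shilov}, which we prove by the iteration described after its statement. Take $\mathrm w\in X\setminus X_0$ with all weights nonzero, and $f$ analytic near $X$. Choose an index $(I_0,j_0)$ of minimal $|I_0|$ at which $|w_{I_0,j_0}|<m_{j_0}$. Following Hartz, define a one-parameter family $\hat{\mathrm w}(t)$ as follows: multiply by $t$ every weight $w_{J,j_0}$ in the ``$j_0$-layer'' through $I_0$, that is, those $J$ with the same $j_0$-coordinate as $I_0$ and $J\ge I_0$ in the other coordinates. For the edges leaving that layer, scale by $1/t$ correspondingly, so that all products $w_{I,j}w_{I+\epsilon_j,k}$ around elementary squares keep their commutation relations \eqref{commutation relation}. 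This is exactly the device in \cite{Hz}, where multiplying a ``cut'' of edges of the lattice by $t$ preserves commutativity.

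One then checks that $\hat{\mathrm w}(t)\in X$ for $t$ in an annulus $\{\rho\le|t|\le R\}$ with $R>1$, where $R$ is the largest value keeping $|t\,w_{J,j_0}|\le m_{j_0}$ and the other constraints intact. The maximum principle for $t\mapsto f(\hat{\mathrm w}(t))$ then gives a boundary $t_0$ with $|f(\mathrm w)|\le|f(\hat{\mathrm w}(t_0))|$. At $t_0$ a new constraint $|\tilde w_{I,j}|=m_j$ becomes saturated while the old ones persist, and the weights stay nonzero. After finitely many steps we land in $X_0$. Density of nonzero-weight points in $X$ handles the rest.

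We expect the main obstacle to be this last verification. The scaling by $t$ along one layer forces a compensating scaling by $1/t$ elsewhere, and we must check that this neither breaks an already saturated equality $|w_{I,j}|=m_j$ nor violates a bound $|w|\le m_j$ on the inner radius $\rho$. Because the bounds $m_j$ are uniform in $I$ rather than row/column sums, we would first try to order the saturation by increasing $|I|$. We would choose the cut to consist only of edges starting at level $|I_0|$ and beyond, so that edges at lower levels, already saturated, are untouched. We would also use $\sum m_j^2\le1$ to keep double contractivity automatic throughout.
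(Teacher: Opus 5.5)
Your reduction is the paper's: Proposition \ref{Arveson}(2) on the weight set $X$, the inclusion $\partial_0 X\subset X_0$, and dilation at points of $X_0$. Your compression argument for $X_0$ (invariance of $\mathcal H_{N+1}^\perp$ under the full constant-weight shift) is correct, and it supplies a step the paper leaves implicit. Note that your $X_0$ requires $\sum m_j^2=1$, so if $\sum m_j^2<1$ you should first enlarge one $m_j$; this keeps $w_{I,j}\le m_j$.

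The genuine gap is in Lemma \ref{Shilov}, at exactly the step you flag, and your deformation does not get past it. Minimality of $|I_0|$ only guarantees that the edges \emph{below} level $|I_0|$ are saturated. The $j_0$-layer through $I_0$ also contains higher edges, which may already satisfy $|w_{J,j_0}|=m_{j_0}$. Such an edge forces $R=1$, and taking $|t|<1$ destroys its saturation. The $1/t$-edges, if saturated, likewise force $\rho=1$. For instance, take $d=2$, $N=1$, $m_1=m_2=1/\sqrt2$, $w_{0,1}=w_{0,2}=\tfrac12$, and all four level-one weights equal to $1/\sqrt2$. This commuting truncated shift satisfies the hypotheses, and its only unsaturated weights are $w_{0,1}$ and $w_{0,2}$. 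Yet the layer through $I_0=0$ contains the saturated weight $w_{\epsilon_2,1}$ (for $j_0=1$) or $w_{\epsilon_1,2}$ (for $j_0=2$). So no admissible circle produces a new saturated constraint while keeping the old ones, and the iteration stalls.

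The missing idea is Hartz's good/bad cut, which is what the paper uses. By observation (a), the set $G$ of good indices is down-closed, so a monotone path leaves $G$ at most once and never re-enters. Scale by $t$ exactly the scalable edges, i.e.\ the $(I,j)$ with $I\in G$ and $I+\epsilon_j\notin G$. Each of the two paths around an elementary square from $I$ to $I+\epsilon_j+\epsilon_k$ then contains one scalable edge if $I\in G$ and $I+\epsilon_j+\epsilon_k\notin G$, and none otherwise. Hence \eqref{commutation relation} is preserved with no $1/t$ compensation. By observation (c), every scalable edge is strictly unsaturated, so $r>1$. The family lies in $X$ on the whole disc $|t|\le r$, with $t=0$ in $X$ because $X$ is a closure, and no saturated edge is moved. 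At the maximizing $t_0$ with $|t_0|=r$, some scalable $(I,j)$ saturates, which makes $I+\epsilon_j$ good. Thus $G$ strictly grows, and finitely many steps reach $X_0$. In the example above, $G=\{0\}$, the scalable edges are $w_{0,1}$ and $w_{0,2}$, and a single step with $r=\sqrt2$ finishes.
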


The argument of sufficiency of 
Lemma \ref{Sperical unitary dilation for truncated weighted shift}  to prove Theorem \ref{Main Theorem} follows from \cite[Lemma 4.1]{Hz}.
To prove Lemma \ref{Sperical unitary dilation for truncated weighted shift}, we need a definition and a few observations.

\begin{definition}[Scalability]
    A multi-index \( I = (i_1, i_2, \dots, i_d) \in \mathbb{Z}_+^d \) is said to be scalable in direction \( j \) (or \((I, j)\) is scalable) if the following conditions are satisfied: \( I \) is good, and \( I+\epsilon_j \) is bad.
A multi-index \( I \)  is said to be good if \[\|T^I e_{(0,\dots ,0)}\|=m_1^{i_1}m_2^{i_2}\cdots m_d^{i_d}\] that means the big rectangular box with diagonal endpoints \( (0, 0, \dots, 0) \) and \( I \) has the property that all the $j$-th direction weights are $m_j,$ for each $j=1,\ldots, d.$ 
Otherwise, we call \( I \) to be a bad index. 
\end{definition}

The following observations are immediate:
\begin{itemize}
    \item[(a)] If \( I \) is good and if \( J \leq I \), then \( J \) is good.
    \item[(b)] If \( (I, j) \in \mathcal{I} \) with \( |w_{I,j}| < m_j \), then \( I + \epsilon_j \) is bad.
    \item[(c)] Suppose that \( |I| \leq N \). If \( I \) is good and \( I + \epsilon_j \) is bad, then \( |w_{I,j}| < m_j \).
\end{itemize}

\begin{proof}[Proof of Lemma \ref{Sperical unitary dilation for truncated weighted shift}:]
Define \[
r := \left( 
\max \left\{ 
\frac{|w_{I,j}|}{m_j} :  (I, j) \text{ is scalable} 
\right\}
\right)^{-1}.
\]
Then \( r> 1\).  Let \( D_r(0) \subset \mathbb{C}\) be the  closed unit disc of radius \(r\)  around 0 . For \(t\in D_r(0)\) and \( 
(I,j)\in \mathcal{I} \) , define 
\[
\hat{w}_{I,j}(t) = 
\begin{cases}
t w_{I,j} & \text{if } (I,j) \text{ is scalable }, \\
w_{I,j} & \text{otherwise},
\end{cases}
\]
and let \( \hat{w}(t) = (\hat{w}_{I,j}(t))_{(I,j) \in I} \). Then it follows from the maximum modulus principle that there exists \( t_0 \in \partial D_r(0) \) with
\[
|f(w)| = |f(\hat{w}(1))| \leq |f(\hat{w}(t_0))|,\] so setting \( \hat{w} = \hat{w}(t_0) \), iterating this process finitely many times yields a point \( v \in {X_0} \) such that all  weights are  \(m_j\) in respective direction  such that \( |f(w)| \leq |f(v)| \) and also satisfying commutativity without changing row and column contraction. So we need to show that \( \hat{w}(t) \) is a commuting family, that is
\[
\hat{w}_{I,j}(t) \hat{w}_{I+\varepsilon_j,k}(t) = \hat{w}_{I,k}(t) \hat{w}_{I+\varepsilon_k,j}(t)
\]
for all \( t \in D_r(0) \) and all multi-indices \( I \) with \( |I| \leq N-1 \) and \( 1 \leq j, k \leq d \).
Let \( I \) be such a multi-index. If \( I \) is bad, it follows from (a) that \( I + \varepsilon_j \) and \( I + \varepsilon_k \) are bad as well, and hence no pairs in \( I \) which appear in the above equation are scalable. If \( I \) and \( I + \varepsilon_j + \varepsilon_k \) are good, then it follows again from (a) that no pairs in the equation are scalable. Thus, it remains to consider the case where \( I \) is good and \( I + \varepsilon_j + \varepsilon_k \) is bad. In this case, exactly one of \( (I,j) \) and \( (I + \varepsilon_j, k) \) is scalable, depending on whether \( I + \varepsilon_j \) is good or bad. Similarly, exactly one of \( (I,k) \) and \( (I + \varepsilon_k, j) \) is scalable. Thus,
\[\hat{w}_{I,j}(t) \hat{w}_{I+\varepsilon_j,k}(t) = t w_{I,j} w_{I+\varepsilon_j,k} = t w_{I,k} w_{I+\varepsilon_k,j} = \hat{w}_{I,k}(t) \hat{w}_{I+\varepsilon_k,j}(t),
\]
as stated. 
Now the proof of Lemma \ref{Shilov} is complete.

Note that Proposition \ref{Dilation for X_0} shows that any weighted shift in $X_0$ has spherical dilation. In particular, using Lemma \ref{Shilov}, we have spherical dilation for any weighted shift in $\partial_0 X.$ An application of Proposition \ref{Arveson} now completes the proof of lemma.
\end{proof}
The following example illustrates that the methods used in the proof of Theorem \ref{Main Theorem} are insufficient to determine whether a doubly contractive weighted shift admits a spherical dilation.
\begin{example}
Consider a two variable weighted shift whose weights are given in the diagram below:
\tikzset{
  vertex/.style={circle,draw, fill=black, inner sep=0.1pt},
  edge/.style={->,> = latex'}
}

\begin{center}
\begin{tikzpicture}[scale=2, vertices/.style={draw, fill=black, circle, inner sep=0.5pt}]
    \node[vertex, label=below:{$(0,0)$}] (a) at (0,0) {};
    \node[vertex, label=below:{$(1,0)$}] (b) at (1,0) {};
    \node[vertex, label=left:{$(0,1)$}] (c) at (0,1) {};
    \node[vertex, label=below:{$(2,0)$}] (d) at (2,0) {};
    \node[vertex, label=left:{$(0,2)$}] (e) at (0,2) {};
    \node[vertex, label=below:{$(3,0)$}] (f) at (3,0) {};
    \node[vertex, label=left:{$(0,3)$}] (g) at (0,3) {};
    \node[vertex] (h) at (5,0) {};
    \node[vertex] (i) at (0,5) {};
    \node[vertex] (j) at (1,1) {};
    \node[vertex, label=above:\rotatebox{10}{$(1,1)$}] (j2) at (1.2,1) {};
    \node[vertex] (k) at (2,1) {};
    \node[vertex, label=above:\rotatebox{10}{$(2,1)$}] (k2) at (2.2,1) {};
    \node[vertex] (l) at (3,1) {};
    \node[vertex, label=above:\rotatebox{10}{$(3,1)$}] (l2) at (3.2,1) {};
    \node[vertex] (m) at (5,1) {};
    \node[vertex] (n) at (1,2) {};
    \node[vertex, label=above:\rotatebox{10}{$(1,2)$}] (n2) at (1.2,2) {};
    \node[vertex] (o) at (1,3) {};
    \node[vertex, label=above:\rotatebox{10}{$(1,3)$}] (o2) at (1.2,3) {};
    \node[vertex] (p) at (1,5) {};
    \node[vertex] (q) at (2,2) {};
    \node[vertex, label=above:\rotatebox{10}{$(2,2)$}] (q2) at (2.2,2) {};
    \node[vertex] (r) at (3,2) {};
    \node[vertex, label=above:\rotatebox{10}{$(3,2)$}] (r2) at (3.2,2) {};
    \node[vertex] (s) at (5,2) {};
    \node[vertex] (t) at (2,3) {};
    \node[vertex, label=above:\rotatebox{10}{$(2,3)$}] (t2) at (2.2,3) {};
    \node[vertex] (u) at (2,5) {};
    \node[vertex] (v) at (3,3) {};
    \node[vertex, label=above:\rotatebox{10}{$(3,3)$}] (v2) at (3.2,3) {};
    \node[vertex] (w) at (5,3) {};
    \node[vertex] (x) at (3,5) {};

    \draw[dashed] (a) -- (b) node[midway, below] {$0.3$};
    \draw[dashed] (a) -- (c) node[midway, left] {$0.6$};
    \draw[dashed] (b) -- (d) node[midway, below] {$0.6$};
    \draw[dashed] (c) -- (e) node[midway, left] {$0.8$};
    \draw[dashed] (d) -- (f) node[midway, below] {$0.8$};
    \draw[dashed] (e) -- (g) node[midway, left] {$0.4$};
    \draw[dashed] (f) -- (h) node[midway, below] {$0.6$};
    \draw[dashed] (g) -- (i) node[midway, left] {$0.3$};
    \draw[dashed] (b) -- (j) node[midway, left] {$0.8$};
    \draw[dashed] (d) -- (k) node[midway, left] {$0.4$};
    \draw[dashed] (f) -- (l) node[midway, left] {$0.3$};
    \draw[dashed] (c) -- (j) node[midway, below] {$0.4$};
    \draw[dashed] (e) -- (n) node[midway, below] {$0.3$};
    \draw[dashed] (g) -- (o) node[midway, below] {$0.6$};
    \draw[dashed] (j) -- (k) node[midway, below] {$0.3$};
    \draw[dashed] (k) -- (l) node[midway, below] {$0.6$};
    \draw[dashed] (n) -- (q) node[midway, below] {$0.4$};
    \draw[dashed] (q) -- (r) node[midway, below] {$0.3$};
    \draw[dashed] (o) -- (t) node[midway, below] {$0.3$};
    \draw[dashed] (t) -- (v) node[midway, below] {$0.4$};
    \draw[dashed] (l) -- (m) node[midway, below] {$0.8$};
    \draw[dashed] (r) -- (s) node[midway, below] {$0.6$};
    \draw[dashed] (v) -- (w) node[midway, below] {$0.3$};
    \draw[dashed] (j) -- (n) node[midway, left] {$0.6$};
    \draw[dashed] (n) -- (o) node[midway, left] {$0.8$};
    \draw[dashed] (k) -- (q) node[midway, left] {$0.8$};
    \draw[dashed] (q) -- (t) node[midway, left] {$0.6$};
    \draw[dashed] (l) -- (r) node[midway, left] {$0.4$};
    \draw[dashed] (r) -- (v) node[midway, left] {$0.8$};
    \draw[dashed] (o) -- (p);
    \draw[dashed] (t) -- (u);
    \draw[dashed] (v) -- (x);
\end{tikzpicture}
\end{center}
\end{example}
This is an example of a doubly contractive two variable weighted shift whose weights are bounded above by $m_1=m_2=0.8$ satisfying $m_1^2 +m_2^2 =1.28>1.$ If we apply the techniques used in the proof of Theorem \ref{Main Theorem}, then we end up getting a weighted shift which is neither a row contraction nor a spherical contraction.
On the other hand, we do not know whether the weighted shift in the above diagram admits a spherical dilation.

We conclude this article with following positive result which shows that  any doubly contractive operator tuple satisfies von Neumann inequality for all homogeneous polynomial of degree $2.$
\begin{proposition}\label{von Neumann's inequality for doubly contractive operators}
 Let \( \boldsymbol T = (T_1, \ldots, T_d) \) be a doubly contractive operator tuple on a Hilbert space $\mathcal{H}$ and $p\in \mathbb{C}[z_1,\ldots ,z_d]$  be a homogeneous polynomial of degree $2$ in $d$-variables then $\|p(T)\|\leq \|p\|_{\infty}$. 
 \end{proposition}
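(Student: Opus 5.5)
The plan is to write $p(\boldsymbol T)$ as the row operator, times a scalar matrix, times the column operator. Doubly contractivity says exactly that both outer factors are contractions, so the estimate reduces to a statement about complex symmetric matrices. Write
\[
p(z)=\sum_{j,k=1}^d a_{jk}z_jz_k,
\]
where the coefficient matrix $A=(a_{jk})$ is chosen \emph{symmetric}, $a_{jk}=a_{kj}$. This is possible by splitting each coefficient of $z_jz_k$, $j\neq k$, equally. Since the $T_j$ commute, $p(\boldsymbol T)=\sum_{j,k}a_{jk}T_jT_k$ for this symmetric choice.

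Define the row operator $R:\mathcal H^{\oplus d}\to\mathcal H$, $R(h_1,\dots,h_d)=\sum_j T_jh_j$, and the column operator $C:\mathcal H\to\mathcal H^{\oplus d}$, $Ch=(T_1h,\dots,T_dh)$. Then
\[
p(\boldsymbol T)=R\,(A\otimes I_{\mathcal H})\,C .
\]
Moreover $RR^*=\sum_j T_jT_j^*\le I$ and $C^*C=\sum_jT_j^*T_j\le I$, so $\|R\|\le1$ and $\|C\|\le 1$ by the two halves of double contractivity. Hence
\[
\|p(\boldsymbol T)\|\le\|A\otimes I\|=\|A\|_{op}.
\]

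It remains to show $\|A\|_{op}\le\|p\|_{\mathbb B_d,\infty}$; this is the only step needing an idea. I would use the Takagi factorization of the complex symmetric matrix $A$: $A=U\Sigma U^{T}$ with $U$ unitary and $\Sigma=\mathrm{diag}(\sigma_1,\dots,\sigma_d)$, $\sigma_1=\|A\|_{op}\ge\sigma_2\ge\dots\ge0$. Take $z=\overline{u_1}$, the conjugate of the first column of $U$, a unit vector. Then $U^Tz=e_1$, so
\[
p(z)=z^TAz=e_1^T\Sigma e_1=\sigma_1 .
\]
Thus $\|p\|_\infty\ge\|A\|_{op}$, in fact with equality, which gives the claim. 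If one prefers to avoid Takagi, an alternative is to note that $\|A\|_{op}=\sup_{|x|=|y|=1}|y^TAx|$. One then uses symmetry and the polarization $y^TAx=\tfrac14\big[(x+y)^TA(x+y)-(x-y)^TA(x-y)\big]$ together with the parallelogram law, which bounds $|y^TAx|\le\tfrac14\big(|x+y|^2+|x-y|^2\big)\|p\|_\infty=\|p\|_\infty$.

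The problem statement says degree $2$, but the abstract mentions degree at most $2$, so I would treat the lower degrees too. For degree $1$, $p(z)=\sum_ja_jz_j$ gives $p(\boldsymbol T)=R(a\otimes I)$, so $\|p(\boldsymbol T)\|\le|a|=\|p\|_\infty$, using only row contractivity. Degree $0$ is trivial. The expected obstacle is only the matrix inequality $\|A\|\le\sup_{\mathbb B_d}|z^TAz|$, and the symmetry of $A$ is essential there: for non-symmetric $A$ the quadratic form does not see the antisymmetric part. This is why commutativity of the tuple is used to symmetrize the coefficients.
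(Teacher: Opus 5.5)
Your proposal is correct and follows the paper's proof essentially step for step. The paper also factors $p(\boldsymbol T)=R\,(A\otimes I)\,C$, uses double contractivity to get $\|R\|,\|C\|\le 1$, and then applies the Takagi factorization $A=U\Sigma U^{T}$ to obtain $\|A\|\le\|p\|_\infty$; your explicit choice $z=\overline{u_1}$ just makes concrete the paper's step $\|p\|_\infty=\sigma_1$, and your polarization alternative and your treatment of degrees $\le 1$ are valid additions that the paper does not include.
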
 
 \begin{proof}
 Let $p(z_1,\ldots, z_d)=\sum_{i,j=1}^d a_{ij}\,z_i z_j$ be a homogeneous polynomial of degree $2.$ Without loss of generality, we can assume that the matrix $A=(a_{ij})_{1\le i,j\le d}$ is  symmetric. 
Note that
\[
p(T_1,\ldots,T_d)
 \;=\; \sum_{i,j=1}^d a_{ij}\,T_i T_j
 \;=\; (T_1,\ldots, T_d)\;
     (A \otimes I)\;
     \begin{pmatrix}
       T_1 \\[4pt] T_2 \\[4pt] \vdots \\[4pt] T_d
     \end{pmatrix}.
\]
Since $T$ is doubly contractive,  we get
$$\|p(T_1,T_2,\ldots,T_d)\| \leq \|A\otimes I\| = \|A\|_2.$$ 
Next, we claim that $\|A\|=\|p(z)\|_{\infty}$. This will complete the proof of the proposition. To this end, observe that 
by the Takagi factorization for complex symmetric matrices, there exists a unitary matrix \(U\in\mathbb{C}^{d\times d}\) and a nonnegative real diagonal matrix \(\Sigma=\operatorname{diag}(\sigma_1,\dots,\sigma_d)\) with \(\sigma_1\ge\sigma_2\ge\cdots\ge0\) such that
\[
A = U\Sigma U^T.
\]
The diagonal entries \(\sigma_i\) are the Takagi singular values of \(A\), see \cite{T.K} (equivalently the largest singular value / spectral norm \(\|A\|_2\)). In particular, \(\sigma_1=\|A\|_2\).
Note that 
\begin{align*}
p(z) &= z^T A z = z^T(U\Sigma U^T)z = (U^T z)^T \Sigma (U^T z).
\end{align*}
Therefore, we get $\|p(z)\|_{\infty}=\|\Sigma\|=\sigma_1=\|A\|.$ Thus the claim stands verified. 
  \end{proof}

\noindent\textbf{Acknowledgment:} The work of the third named author was supported by the Anusandhan National Research Foundation (ANRF) through IRG research grant (Ref. No. ANRF/IRG/2024/000432/MS). The authors are grateful to Dr. Hartz for his fruitful discussions at the OTOA conference in 2018 (ISI Bangalore). 
We also express our gratitude to Prof. Cherian Varughese and Prof. Gadadhar Misra for organizing the informal conference `Operator Analysis - A Renaissance' in 2024 at Gift City Club, Gandhinagar,  during which this problem was discussed and several participants provided helpful comments.

\bibliographystyle{amsplain}
	
\end{document}